\newtheorem{theorem}{Theorem}[section]
\newtheorem*{lemma*}{Lemma}
\newtheorem*{theorem*}{Theorem}
\newtheorem{lemma}[theorem]{Lemma}
\newtheorem{proposition}[theorem]{Proposition}
\theoremstyle{definition}
\newtheorem{example}[theorem]{Example}
\theoremstyle{remark}
\numberwithin{equation}{section}
\begin{document}

 \title{Left-orderability and cyclic branched coverings}
\author{Ying Hu}
\address{Department of Mathematics, Louisiana State University, Baton Rouge, Louisiana 70803}
\email{yhu4@math.lsu.edu}
 \date{\today}
 
 \maketitle

\begin{abstract}
We provide an alternative proof of a sufficient condition for the fundamental group of the $n^{th}$ cyclic branched cover of $S^3$ along a prime knot $K$ to be left-orderable, which is originally due to  Boyer-Gordon-Watson.  As an application of this sufficient condition, we show that for any $(p,q)$ two-bridge knot, with $p\equiv 3 \text{ mod } 4$, there are only finitely many cyclic branched covers whose fundamental groups are not left-orderable. This answers a question posed by D{\c a}bkowski, Przytycki and Togha.

\end{abstract}

\section{Introduction}

\subsection{Background and results}
 A group $G$ is called {\it left-orderable} if there exists a strict total ordering $<$ on the set of group elements, such that  given any two elements $a$ and $b$ in $G$, if $a<b$ then $c a < c b$ for any $c\in G$.

It is known that  any connected, compact, orientable
 $3$-manifold with a positive first  Betti number  has a left-orderable
 fundamental group \cite[Theorem 1.1]{brw}\cite{hs}. In contrast, for a rational homology sphere, the left-orderability of its fundamental group is a nontrivial property, which is  closely related to the co-oriented taut foliations on the manifold \cite{cd}.  Moreover,   Boyer, Gordon and Watson conjectured that an irreducible rational homology $3$-sphere $M$ is an $L$-space \cite{os} if and only if its fundamental group $\pi_1(M)$ is not left-orderable \cite{bgw}.

Let $X_K$ be the complementary space obtained by removing an open tubular neighborhood of the knot $K$ from the three sphere  $S^3$ and $X_K^{(n)}$ be the $n$th cyclic branched cover of $S^{3}$ branched over the knot $K$. The first Betti number $b_1(X_K^{(n)})$ equals zero  if no root of the Alexander polynomial $\Delta_K(t)$ is an $n^{th}$ root of unity. Hence, most of the cyclic branched covers along a knot are rational homology spheres.  In particular, this is the case if $n$ is a prime power.

For this class of rational homology spheres, the L-space conjecture \cite{bgw} has been verified in the following cases, where they are all L-spaces and have non-left-orderable fundamental groups:
\begin{itemize}
 \item The twofold branched cover of any non-split alternating link \cite{bgw,gre,ito,os1}. 
 \item The $n^{th}$ cyclic branched cover of a $(p,q)$ two-bridge knot with $p/q=2m+\frac{1}{2k}$, $mk>0$ and $n$ arbitrary \cite{dpt, tp}. 
 \item  The $3^{rd}$ and $4^{th}$ cyclic branched cover of a $(p,q)$ two-bridge knot with $p/q=n_1+\frac{1}{1+\frac{1}{n_2}}$ and $n_1, n_2$ are positive odd integers (i.e.  $p/q=2m+\frac{1}{2k}$, $mk<0$) \cite{dpt,gl,tp,ter}. 
\end{itemize}

The motivation of this paper is  a question posed in \cite{dpt}: Given a two-bridge knot $K$,  is  $\pi_1(X_K^{(n)})$ always non-left-orderable whenever $b_1(X_K^{(n)})=0$ ?  We answer this question negatively. In fact, we prove that  for $(p,q)$ two-bridge knots with $p\equiv 3$ mod $4$, there are only finitely many cyclic branched covers that have non-left-orderable fundamental groups. At the end, we will present the knot  $5_2$ as an example and show that the fundamental group $\pi_1(X_{5_2}^{(n)})$ is left-orderable if $n\geq9$. Shortly after this article posted, Tran computed a lower bound (depending on the knot) on the order $n$ so that  the $n^{th}$ cyclic branched cover has a non-left-orderable fundamental group for a large class of two-bridge knots \cite{tran}.

A similar question for hyperbolic knots was also posed in \cite{dpt} and was first answered in \cite[Proposition 23]{clw}.  They showed that the twofold branched cover  of $S^3$ along the Conway knot, which is a non-alternating hyperbolic knot listed as 11n34 in the standard knot tables, has a left-orderable fundamental group and so do all even order cyclic branched covers.

\subsection{Plan of the paper.} 
{\bf Section 2} is devoted to proving Lemma \ref{lem:in}, which is essential in our proof of Theorem \ref{thm:cri}. 

 \begin{lemma*}[Lemma \ref{lem:in}]
   Given a  knot $K$ in $S^3$,  denote by $Z$ a meridional element in the knot group $\pi_1(X_K)$. Suppose that there exists a group homomorphism $\rho$ from $\pi_1(X_K)$ to a group $G$ and  $\rho(Z^{n})$ is in the center of $G$.  Then  $\rho$ induces a group homomorphism from $\pi_1(X_K^{(n)})$ to $G$. In particular, if $\rho$ is non-abelian, then the induced homomorphism is nontrivial. 
 \end{lemma*}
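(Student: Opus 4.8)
The plan is to realize the branched cover group as a quotient of a subgroup of $\pi_1(X_K)$ and to push $\rho$ through it via a suitable twist. Let $\phi \colon \pi_1(X_K) \to \mathbb{Z}$ be the abelianization (sending $Z \mapsto 1$), reduce it mod $n$ to get a surjection $\pi_1(X_K) \to \mathbb{Z}/n$, and let $N$ be its kernel, the index-$n$ normal subgroup carried by the $n$-fold cyclic cover of the complement $X_K$. Because the meridian lifts, the standard van Kampen computation for the branched cover identifies $\pi_1(X_K^{(n)})$ with $N / \langle\langle Z^n \rangle\rangle_N$, where $\langle\langle Z^n\rangle\rangle_N$ denotes the normal closure of $Z^n$ in $N$ (geometrically, filling in the lifted meridian disk kills the lift of $Z^n$). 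Note that $Z^n \in N$.

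The key point is that simply restricting $\rho$ to $N$ does not descend to this quotient: the $N$-conjugates of $Z^n$ all map, by centrality, to the single central element $\rho(Z^n)$, which need not be trivial. To fix this I would introduce a twisted map. For $g \in N$, write $m(g) = \phi(g)/n \in \mathbb{Z}$ (well defined since $\phi(g) \equiv 0 \bmod n$), and define $\rho' \colon N \to G$ by
$$\rho'(g) = \rho(g)\,\rho(Z^n)^{-m(g)}.$$
Using that $\rho(Z^n)$ is central and that $m \colon N \to \mathbb{Z}$ is a homomorphism, a direct check shows $\rho'$ is a homomorphism. Since $m(Z^n) = 1$ we get $\rho'(Z^n) = \rho(Z^n)\rho(Z^n)^{-1} = 1$, and because $\rho'$ is a homomorphism every $N$-conjugate of $Z^n$ is killed as well; hence $\rho'$ vanishes on $\langle\langle Z^n\rangle\rangle_N$ and descends to the desired homomorphism $\overline{\rho'}\colon \pi_1(X_K^{(n)}) \to G$, landing in $G$ itself rather than in a quotient of it.

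For nontriviality, observe that the commutator subgroup $[\pi_1(X_K), \pi_1(X_K)]$ lies in $N$ (the quotient $\mathbb{Z}/n$ is abelian) and that every commutator is null-homologous, so $m$ vanishes on it; therefore $\rho'$ and $\rho$ agree on $[\pi_1(X_K),\pi_1(X_K)]$. If $\overline{\rho'}$ were trivial, then $\rho'$ would vanish on all of $N$, in particular on the commutator subgroup, forcing $\rho$ to kill every commutator and hence to factor through $\mathbb{Z}$, i.e. to be abelian. This contradicts the hypothesis that $\rho$ is non-abelian, so $\overline{\rho'}$ is nontrivial.

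The main obstacle, and the real content of the argument, is the second paragraph: the naive restriction of $\rho$ lands in $G$ only after quotienting by $\langle \rho(Z^n)\rangle$, and it is precisely the centrality of $\rho(Z^n)$ that makes the correction factor $\rho(Z^n)^{-m(g)}$ multiplicative, allowing the induced map to take values in $G$ on the nose. The remaining ingredients—the van Kampen identification of $\pi_1(X_K^{(n)})$ and the commutator-subgroup argument for nontriviality—are routine once this twist is in place.
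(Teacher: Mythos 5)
Your proof is correct, and it reaches the conclusion by a genuinely different route from the paper's. The paper works with explicit presentations: starting from a Seifert surface it writes $\pi_1(X_K)$ in Lin's form, applies the Reidemeister--Schreier method to present the kernel $N=\mathrm{Ker}(\pi_1(X_K)\to\mathbb{Z}_n)$ with $Z^n$ and the conjugates $Z^kx_iZ^{-k}$ as generators, deletes $Z^n$ to present $\pi_1(X_K^{(n)})$, and then checks that the assignment $Z^kx_iZ^{-k}\mapsto\rho(Z^kx_iZ^{-k})$ respects the one family of relations where centrality of $\rho(Z^n)$ enters. You dispense with presentations entirely: you use only the identification $\pi_1(X_K^{(n)})\cong N/\langle\langle Z^n\rangle\rangle_N$ and correct the restriction of $\rho$ by the central factor $\rho(Z^n)^{-m(g)}$ with $m=\phi/n$, so that $Z^n$ itself, and hence its whole normal closure, is killed. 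The two constructions in fact yield the same homomorphism---your twist vanishes on the paper's generators $Z^kx_iZ^{-k}$ because the $x_i$ lie in the complement of a Seifert surface and are therefore null-homologous, so $m(Z^kx_iZ^{-k})=0$---but your argument is more general (it uses nothing about knots beyond the existence of the surjection $\phi$ with $\phi(Z)=1$ and the quotient description of the branched cover group) and avoids the bookkeeping of abstract Reidemeister--Schreier symbols. What the paper's route buys is the explicit presentation of $\pi_1(X_K^{(n)})$ itself, which has geometric content (the pieces $Y_k$ of the cyclic cover) and independent interest. Your nontriviality argument---if the induced map were trivial then $\rho$ would kill every commutator and hence be abelian---is a clean, generating-set-free version of the paper's observation that $\rho(x_i)\neq 1$ for some $i$.
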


We finish the proof of Theorem \ref{thm:cri} in {\bf Section 3}.
 \begin{theorem*}[Theorem \ref{thm:cri}]
  Given any prime knot $K$ in $S^3$, denote by  $Z$ a meridional element of 
 $\pi_1(X_K)$. If there exists a non-abelian representation $\pi_1(X_K)$ to $SL(2,\mathbb{R})$ such that  $Z^n$ is sent to $\pm I$ then the fundamental group $\pi_1(X_K^{(n)})$ is left-orderable. 
\end{theorem*}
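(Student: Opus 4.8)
The plan is to promote the given $SL(2,\mathbb{R})$-representation to a representation into the universal cover $\widetilde{SL(2,\mathbb{R})}$, which (unlike $SL(2,\mathbb{R})$ itself) is torsion-free and left-orderable, and then to feed a nontrivial homomorphism from $\pi_1(X_K^{(n)})$ into this left-orderable group through a general left-orderability criterion for irreducible $3$-manifold groups. The point is that $SL(2,\mathbb{R})$ has torsion and so cannot be left-orderable, whereas its universal cover is.

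First I would lift $\rho$ across the covering $p\colon \widetilde{SL(2,\mathbb{R})} \to SL(2,\mathbb{R})$. Because the knot exterior $X_K$ is a $K(\pi,1)$ with the homology of a circle, one has $H^2(\pi_1(X_K);\mathbb{Z}) = H^2(X_K;\mathbb{Z}) = 0$, so the obstruction to lifting, namely the pullback under $\rho$ of the class of the central extension $1 \to \mathbb{Z} \to \widetilde{SL(2,\mathbb{R})} \to SL(2,\mathbb{R}) \to 1$, lies in a trivial group and automatically vanishes. Hence $\rho$ lifts to a homomorphism $\tilde\rho\colon \pi_1(X_K) \to \widetilde{SL(2,\mathbb{R})}$ with $p\circ\tilde\rho = \rho$. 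Since $p$ is a homomorphism and $\rho$ is non-abelian, $\tilde\rho$ is non-abelian as well.

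The next step is to verify that $\tilde\rho(Z^{n})$ is central in $\widetilde{SL(2,\mathbb{R})}$, so that Lemma \ref{lem:in} applies with $G = \widetilde{SL(2,\mathbb{R})}$. Here I would use the identification $\widetilde{SL(2,\mathbb{R})} = \widetilde{PSL(2,\mathbb{R})}$ together with the fact that the kernel of $\widetilde{PSL(2,\mathbb{R})} \to PSL(2,\mathbb{R})$ is exactly the (infinite cyclic) center. Since $\rho(Z^{n}) = \pm I$ maps to the identity in $PSL(2,\mathbb{R})$, the element $\tilde\rho(Z^{n})$ lies in this kernel; equivalently $p^{-1}(\{\pm I\})$ is precisely the center of $\widetilde{SL(2,\mathbb{R})}$. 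Thus $\tilde\rho(Z^{n})$ is central, and Lemma \ref{lem:in} produces a nontrivial homomorphism $\pi_1(X_K^{(n)}) \to \widetilde{SL(2,\mathbb{R})}$, the nontriviality coming from non-abelianity of $\tilde\rho$.

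Finally I would convert this into left-orderability. The group $\widetilde{SL(2,\mathbb{R})} = \widetilde{PSL(2,\mathbb{R})}$ acts faithfully and orientation-preservingly on $\mathbb{R}$, the universal cover of the circle at infinity $\partial\mathbb{H}^2$, so the image of the induced map is a countable subgroup of $\mathrm{Homeo}^{+}(\mathbb{R})$ and is therefore left-orderable; this exhibits a nontrivial homomorphism from $\pi_1(X_K^{(n)})$ onto a left-orderable group. Because $K$ is prime, the cyclic branched cover $X_K^{(n)}$ is irreducible (an essential sphere would, via the equivariant sphere theorem, descend to a connected-sum decomposition of $K$), so the criterion of Boyer--Rolfsen--Wiest \cite{brw} applies: for a compact orientable irreducible $3$-manifold, a nontrivial homomorphism from its fundamental group to a left-orderable group forces that fundamental group to be left-orderable. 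This yields the conclusion. I expect the crux to be the middle two steps, where the torsion target $SL(2,\mathbb{R})$ is replaced by its universal cover; everything hinges on the vanishing of the lifting obstruction in $H^2(X_K;\mathbb{Z})$ and on the exact identification of the center of $\widetilde{SL(2,\mathbb{R})}$ as the preimage of $\{\pm I\}$, which is what makes $\tilde\rho(Z^{n})$ central and allows Lemma \ref{lem:in} to descend the representation to the branched cover.
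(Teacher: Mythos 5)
Your proposal is correct and follows essentially the same route as the paper: lift $\rho$ to $\widetilde{SL(2,\mathbb{R})}$ using the vanishing of $H^2(X_K;\mathbb{Z})$, observe that $\tilde\rho(Z^n)$ lands in $E^{-1}(\{\pm I\})=\mathcal{Z}(\widetilde{SL(2,\mathbb{R})})$ so that Lemma \ref{lem:in} applies, and then combine left-orderability of $\widetilde{SL(2,\mathbb{R})}$ with irreducibility of $X_K^{(n)}$ and the Boyer--Rolfsen--Wiest criterion. The only cosmetic difference is that you identify the center via $\widetilde{PSL(2,\mathbb{R})}$, while the paper invokes the general fact that for connected Lie groups the center of the universal cover is the preimage of the center.
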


This result was first observed by Boyer-Gordon-Watson in \cite{bgw}, where they showed the following: 
\begin{theorem*}[Theorem 6 in \cite{bgw}]
   Let $K$ be a prime knot in the $3$-sphere and suppose that the fundamental group of its twofold branched cyclic cover is not left-orderable. If $\rho: \pi_1(S^3\setminus K)\rightarrow Homeo_+(S^1)$ is a homomorphism such that $\rho(\mu^2)=1$ for some meridional class $\mu$ in $\pi_1(S^3\setminus K)$, then the  image of $\rho$ is either trivial or isomorphic to $\mathbb{Z}_2$.
\end{theorem*}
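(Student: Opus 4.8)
The plan is to prove the contrapositive in a reformulated shape. First I record an equivalent form of the conclusion. Since $K$ is a knot in $S^3$, the meridian $\mu$ normally generates $\pi_1(S^3\setminus K)$, so $\mathrm{im}(\rho)$ is the normal closure of the single element $t:=\rho(\mu)$, which satisfies $t^2=\rho(\mu^2)=1$ and hence has order $1$ or $2$. If $\mathrm{im}(\rho)$ is abelian, the normal closure of $t$ is just $\langle t\rangle$, so $\mathrm{im}(\rho)$ is trivial or $\cong\mathbb{Z}_2$; conversely a trivial or order-two image is abelian. Thus the theorem is equivalent to the assertion that $\rho$ is abelian, and I will assume for contradiction that $\rho$ is non-abelian.

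Next I apply Lemma \ref{lem:in} with $n=2$, $Z=\mu$ and $G=Homeo_+(S^1)$: the element $\rho(\mu^2)=1$ is trivially central in $G$, so $\rho$ descends to a homomorphism $\bar\rho\colon\pi_1(X_K^{(2)})\to Homeo_+(S^1)$, and by the final clause of the Lemma $\bar\rho$ is nontrivial because $\rho$ is non-abelian. Write $H=\pi_1(X_K^{(2)})$ and $Q=\mathrm{im}(\bar\rho)$, a nontrivial countable subgroup of $Homeo_+(S^1)$. At this point I also note that $H$ is the fundamental group of a rational homology sphere: by \cite{brw} a positive first Betti number would make $H$ left-orderable, contrary to hypothesis, so $b_1(X_K^{(2)})=0$ and $H_1(X_K^{(2)};\mathbb{Z})$ is finite; in fact it has odd order, equal to the odd determinant $|\Delta_K(-1)|$.

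The heart of the argument is to derive from the nontrivial circle action $\bar\rho$ that $H$ is left-orderable, contradicting the hypothesis. I would use that, for the fundamental group of a rational homology sphere, left-orderability is equivalent to admitting a nontrivial homomorphism to $Homeo_+(\mathbb{R})$ (the technology underlying \cite{brw}); thus it suffices to convert the circle action into a nontrivial action on $\mathbb{R}$. The obstruction to lifting $\bar\rho$ through the central extension $1\to\mathbb{Z}\to\widetilde{Homeo_+(S^1)}\to Homeo_+(S^1)\to 1$ is its Euler class $e\in H^2(H;\mathbb{Z})$, which is odd torsion since $H^2(X_K^{(2)};\mathbb{Z})\cong H_1(X_K^{(2)};\mathbb{Z})$ is finite of odd order. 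To decide whether $e$ vanishes I appeal to the dynamical classification of the countable subgroup $Q\le Homeo_+(S^1)$. A clean case is when $Q$ is finite: then $Q$ is a cyclic rotation group, and the fixed-point-free order-two involution $t=\rho(\mu)$, which normalizes $Q$, must in fact centralize it, since an orientation-preserving homeomorphism normalizing a finite rotation group commutes with it; hence $\mathrm{im}(\rho)=\langle Q,t\rangle$ is abelian, contradicting the standing assumption. The remaining cases --- $Q$ preserving an atomless probability measure (where the rotation number relative to that measure is a homomorphism $Q\to\mathbb{R}/\mathbb{Z}$ that, composed with $H\twoheadrightarrow Q$, factors through the finite group $H_1(X_K^{(2)};\mathbb{Z})$), or $Q$ acting minimally and proximally without invariant measure --- are exactly where one must show that the odd-torsion Euler class does not obstruct passage to a line.

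The main obstacle is precisely this conversion of the circle action into a line action. Because $t$ is conjugate to the antipodal map and has no fixed point, there is no global fixed point of the action to restrict to, so one cannot simply produce an invariant interval on which $H$ acts; instead the entire difficulty is concentrated in showing that the Euler class $e\in H^2(H;\mathbb{Z})$ either vanishes or can be circumvented. The two structural inputs that I expect to make this possible are the finiteness and odd order of $H_1(X_K^{(2)};\mathbb{Z})$ and the rigidity imposed by the fixed-point-free involution $t$, as already exploited in the finite case. Some additional care is required so that left-orderability is obtained for $H$ itself and not merely for a finite-index subgroup or a central extension of $Q$, since left-orderability does not in general descend from a finite-index subgroup; I would handle this by arranging the line action on $H$ directly through the vanishing of $e$ and then invoking the equivalence from \cite{brw} at the level of $H$.
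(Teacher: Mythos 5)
Your opening reduction is correct (the image is the normal closure of $t=\rho(\mu)$, so the conclusion is equivalent to $\rho$ being abelian), and invoking Lemma \ref{lem:in} with $n=2$ to obtain $\bar\rho\colon \pi_1(X_K^{(2)})\to Homeo_+(S^1)$ is fine. But the proof then stalls at exactly the decisive point, and the order in which you perform the two operations (descend to the branched cover, then lift to the line) is what creates the obstacle. Having descended first, you must lift $\bar\rho$ through the central extension over $H=\pi_1(X_K^{(2)})$, where the Euler class lives in $H^2(H;\mathbb{Z})$, a finite group of odd order that is in general \emph{nonzero}; and odd-torsion Euler classes do genuinely obstruct lifting, since $Homeo_+(\mathbb{R})$ is torsion-free, so a faithful rotation action of $\mathbb{Z}_k$ ($k$ odd) never lifts --- your own finite case is precisely such an example, which is why you had to dispose of it by a separate dynamical argument rather than by lifting. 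For the remaining cases (invariant atomless measure; minimal proximal action) you say only that "one must show" the class does not obstruct; nothing in the proposal shows it, and the hypothesis that $\pi_1(X_K^{(2)})$ is not left-orderable cannot be used to help here, because it is exactly what you are trying to contradict. So the central step of the argument is missing, and the hoped-for principle behind it (odd torsion classes can be circumvented) is false as stated.

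The paper's argument (the proof of Lemma \ref{lem:rep} and Theorem \ref{thm:cri}, with $\widetilde{Homeo_+(S^1)}$ in place of $\widetilde{SL(2,\mathbb{R})}$, as indicated in the remarks of Section 1) avoids the obstruction entirely by lifting \emph{before} descending: since $H^2(\pi_1(X_K);\mathbb{Z})\cong H^2(X_K;\mathbb{Z})=0$, every central extension of the knot group by $\mathbb{Z}$ splits, so $\rho$ itself lifts to $\tilde\rho\colon\pi_1(X_K)\to\widetilde{Homeo_+(S^1)}$ with no Euler class ever appearing. Then $\tilde\rho(\mu^2)$ lies in the kernel $\mathbb{Z}$ of the covering map, which is central in $\widetilde{Homeo_+(S^1)}$, so Lemma \ref{lem:in} (with $n=2$ and $G=\widetilde{Homeo_+(S^1)}$) yields a homomorphism $\pi_1(X_K^{(2)})\to\widetilde{Homeo_+(S^1)}$, nontrivial because $\tilde\rho$ is non-abelian whenever $\rho$ is. Since $\widetilde{Homeo_+(S^1)}$ is left-orderable and $X_K^{(2)}$ is irreducible because $K$ is prime \cite{plo} --- a use of primeness your write-up never makes, though Theorem \ref{thm:brw} requires irreducibility --- one concludes $\pi_1(X_K^{(2)})$ is left-orderable, contradicting the hypothesis. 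Reversing the order of your two steps in this way repairs the proof; as written, it has a genuine gap.
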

 
 Here we make two remarks in comparison to Theorem \ref{thm:cri} with \cite[Theorem 6]{bgw}.
 \begin{itemize}
  \item The proof of \cite[Theorem 6]{bgw} naturally extends to the $n^{th}$ cyclic branched cover for arbitrary $n$. Since $PSL(2,\mathbb{R})$ is a subgroup of $Homeo_+(S^1)$, the group of orientation preserving homeomorphisms of $S^1$, Theorem \ref{thm:cri} is contained in \cite[Theorem 6]{bgw} in this sense.
 \item On the other hand,  if we replace the central extension  
 $$0 \longrightarrow \mathbb{Z} \longrightarrow \widetilde{SL(2,\mathbb{R})}\longrightarrow SL(2,\mathbb{R})\longrightarrow 1$$
that we used in the proof of Theorem \ref{thm:cri} by the extension below \cite{ghy}
 \begin{displaymath}
   0 \longrightarrow \mathbb{Z} \longrightarrow \widetilde{Homeo_+(S^1)}\longrightarrow Homeo_+(S^1)\longrightarrow 1
 \end{displaymath}
 the same statement with \cite[Theorem 6]{bgw} can be achieved. 
\end{itemize}

  Finally,  in {\bf Section 4}, we prove our main result in this paper. 
    \begin{theorem*}[Theorem \ref{thm:tb}]
Given a $(p,q)$ two-bridge knot $K$, with $p\equiv 3 \text{ mod } 4$, there are only  finitely many cyclic branched covers, whose fundamental groups  are not left-orderable.
\end{theorem*}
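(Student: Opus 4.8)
The plan is to deduce the statement from Theorem \ref{thm:cri} by constructing, for all but finitely many $n$, a suitable non-abelian $SL(2,\mathbb{R})$ representation of the knot group. Since a two-bridge knot $K=K(p,q)$ is prime, Theorem \ref{thm:cri} applies, so it suffices to produce, for every sufficiently large $n$, a non-abelian $\rho_n\colon \pi_1(X_K)\to SL(2,\mathbb{R})$ sending a meridian $Z$ to an element with $\rho_n(Z)^n=\pm I$. The key observation is that if $\rho_n(Z)$ is \emph{elliptic} with $\operatorname{tr}\rho_n(Z)=2\cos(\pi/n)$, then its eigenvalues are $e^{\pm i\pi/n}$ and hence $\rho_n(Z)^n=-I=\pm I$. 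Thus the entire problem reduces to the following: for all large $n$, find a non-abelian $SL(2,\mathbb{R})$ representation whose meridian has trace exactly $2\cos(\pi/n)$.

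To organize the search I would use the two-bridge presentation $\pi_1(X_K)=\langle a,b \mid \Omega a=b\,\Omega\rangle$, in which $a$ and $b$ are meridians, together with the Riley parametrization of non-abelian representations, $\rho(a)=\bigl(\begin{smallmatrix} s & 1 \\ 0 & s^{-1}\end{smallmatrix}\bigr)$ and $\rho(b)=\bigl(\begin{smallmatrix} s & 0 \\ -u & s^{-1}\end{smallmatrix}\bigr)$, where $(s,u)$ is a root of the Riley polynomial $\phi_K(s,u)$ and the meridian trace is $x=s+s^{-1}$. I would also record the standard dichotomy: an irreducible representation of the free group on $a,b$ with real character is conjugate into $SU(2)$ or into $SL(2,\mathbb{R})$ according as $\kappa:=\operatorname{tr}[\rho(a),\rho(b)]-2=2x^2+z^2-x^2z-4$ is negative or positive, where $z=\operatorname{tr}\rho(ab)$, with $\kappa=0$ occurring exactly at reducible characters. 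This is the device that separates the representations we want ($SL(2,\mathbb{R})$) from the unwanted compact ones.

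The construction begins at the parabolic locus $s=1$ (so $x=2$), where $\phi_K(1,u)$ is a polynomial with integer coefficients of degree $(p-1)/2$. This is precisely where the hypothesis enters: when $p\equiv 3\pmod 4$ the degree $(p-1)/2$ is \emph{odd}, so $\phi_K(1,u)$ has a real root $u_0\neq 0$, and the resulting $\rho_0$ (take $s=1,\ u=u_0$ above) is a non-abelian representation lying manifestly in $SL(2,\mathbb{R})$. At $\rho_0$ one has $x=2$, so $\kappa=(z_0-2)^2$, which is strictly positive because $\rho_0$ is irreducible. I would then deform $\rho_0$ to create elliptic meridians: choosing $u_0$ to be a simple real root (one exists since the degree is odd), the character-variety curve $\{\phi_K=0\}$ is smooth at $\rho_0$ and defined over $\mathbb{R}$, so its real points form an arc through $\rho_0$ on which $x$ is a non-constant real-analytic function; provided this arc is not tangent to $\{x=2\}$, the trace $x$ attains every value in some interval $(2-\varepsilon,2)$ along nearby real representations. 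By continuity of $\kappa$ and $\kappa(\rho_0)>0$, these representations satisfy $\kappa>0$, hence are conjugate into $SL(2,\mathbb{R})$, are non-abelian, and have elliptic meridian.

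To finish, I would invoke density: since $2\cos(\pi/n)\to 2$ as $n\to\infty$, there is an $N$ with $2\cos(\pi/n)\in(2-\varepsilon,2)$ for all $n\geq N$. For each such $n$ the previous step supplies a non-abelian $\rho_n\colon\pi_1(X_K)\to SL(2,\mathbb{R})$ with $\operatorname{tr}\rho_n(Z)=2\cos(\pi/n)$, whence $\rho_n(Z)^n=-I$, and Theorem \ref{thm:cri} makes $\pi_1(X_K^{(n)})$ left-orderable; only the finitely many $n<N$ can fail, which is the assertion. I expect the main obstacle to be exactly the deformation step of the third paragraph, namely guaranteeing that off the parabolic locus the arc carries the meridian into the \emph{elliptic} range $x<2$ while staying in $SL(2,\mathbb{R})$ rather than $SU(2)$ — that is, simultaneously controlling ellipticity ($x<2$) and reality (the sign of $\kappa$). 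Verifying smoothness of $\rho_0$ and the non-tangency to $\{x=2\}$ through the appropriate non-vanishing partial derivative of $\phi_K$ is the technical heart of the argument, whereas the parity input $p\equiv3\pmod4$ is what makes the whole scheme possible by furnishing the initial real parabolic representation.
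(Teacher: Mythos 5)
Your proposal is essentially the paper's own argument: reduce to Theorem \ref{thm:cri}, use $p\equiv 3 \pmod 4$ to make the Riley polynomial have odd $s$-degree $\frac{p-1}{2}$, start from a real non-abelian representation at the parabolic locus, and deform the meridian trace to $2\cos(\pi/n)$ for all large $n$ while certifying conjugacy into $SL(2,\mathbb{R})$ (you via the sign of $\operatorname{tr}[\rho(a),\rho(b)]-2$, the paper via Khoi's explicit criterion $s<0$ or $s>4\sin^2\theta$). Two of the steps you leave conditional are precisely what Lemma \ref{lem:ril} supplies, and one of your justifications is not valid as stated. First, odd degree alone does not produce a \emph{simple} real root (e.g.\ $(u-1)^3(u^2+1)$ has odd degree and no simple real root); what is actually needed is Riley's theorem that $\varphi(1,s)$ has no repeated factors, which makes every root simple and yields both the smoothness at $\rho_0$ and the non-tangency to $\{x=2\}$ that you flag as the main obstacle --- the implicit function theorem then solves the remaining variable as a function of the trace, so $x$ sweeps a full interval $(2-\varepsilon,2]$. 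Second, ``the real points of $\{\phi_K=0\}$'' must be read in the symmetrized coordinates $(x,u)$ with $x=s+s^{-1}$, not in $(s,u)$: real $s$ forces $|x|\geq 2$, i.e.\ a hyperbolic or parabolic meridian, and the elliptic range $x<2$ is reached only by taking $s=e^{i\theta}$ on the unit circle. That the resulting polynomial in $u$ still has real coefficients there is exactly the identity $\varphi(m,s)=\varphi(m^{-1},s)$ of Lemma \ref{lem:ril}(4), which is the one nontrivial input your sketch omits. With those two facts inserted, your argument closes and coincides with the paper's proof.
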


\section{The fundamental groups of  cyclic branched covers}

Given a Seifert surface $F$, one can present the knot group $\pi_1(X_K)$ as an HNN extension of $\pi_1(S^3\setminus F)$ over the surface group $\pi_1(F)$, (the usual definition of the HNN extension requires $F$ to be incompressible, but we do not need it here). We then apply the Reidemeister-Schreier Method to the presentation of $\pi_1(X_K)$ and obtain a presentation of $\pi_1(X_K^{(n)})$, from where Lemma \ref{lem:in} follows.

More precisely, let $F$ be a Seifert surface of an oriented knot $K$. It has a regular neighborhood that is homeomorphic to $F\times [-1, 1]$, where the positive direction is chosen so that the induced orientation on the boundary $\partial F$ is the same as the chosen orientation on the knot $K$.

\begin{center}
\begin{figure}[ht]
\begin{tikzpicture}[scale=0.8]
 \draw [ultra thick] (2,2) -- (9,2);
 \draw [ultra thick] (2,4) -- (9,4);
 \draw [ultra thick]  (10.5,3) circle [radius=1.8];
 \draw [->, ultra thick, cyan] (9,2) to [out=298, in=180] (10.5,1) to  [out=0, in=270] (12.6,3);
 \draw [ultra thick, cyan](12.6,3) to [out=90, in=0] (10.5,5) to [out=180, in=62] (9,4);
 \draw [dashed, thick] (2,3) -- (10.5,3);
 \draw [ultra thick] (12,2.18) -- (12,2) -- (12.18,2);
 \node[right] at (10.5,3) {$K$};
 \node[left] at (2,2) {$F_-$};
  \node[left] at (2,3) {$F$};
   \node[left] at (2,4) {$F_+$};
   \node[above left] at (9,4) {$P_+$};
   \node[below left] at (9,2) {$P_-$};
   \node[cyan] at (13, 3) {$C$};
   \node [below left] at (12,2.3) {$Z$};
 \draw [fill=black] (10.5,3) circle [radius=0.1];
 \draw [fill=black] (9,2) circle [radius=0.1];
 \draw [fill=black] (9,4) circle [radius=0.1]; 
\end{tikzpicture}
\caption{\small A cross-sectional view of a collar neighborhood of $F$ in the knot complement $X_K$, where   $F_{\pm}$ represent $F\times \pm 1$, respectively.  In addition, the point $P_+$ (resp. $P_-$) is the intersection point of  the meridian $Z$ and $F_+$ (resp. $F_-$).
}
\label{fig:seifert}
\end{figure}
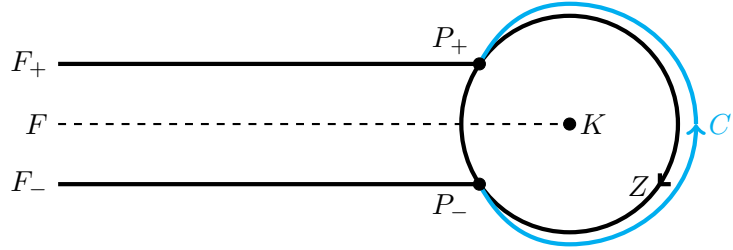
\end{center}

Suppose that the free group $\pi_1(F_-,P_- )$ is generated by  $\{a^-_i\}_{ i=1,\dots, 2g}$ and $\pi_1(F_+, P_+)$ is generated by $\{a^+_i\}_{ i=1, \dots, 2g}$, where $g$ is the genus of the Seifert surface $F$. 
 
We denote by $\alpha_i^-$ the image of $a_i^-$ under the inclusion map $$\pi_1(F_-, P_-) \rightarrow \pi_1(S^3-F,P_-)$$ 
and denote by  $\alpha_i^+$  the image of $a_i^+$ in $\pi_1(S^3-F, P_-)$ under the composition map 
\begin{displaymath}
  \pi_1(F_+,P_+) \rightarrow \pi_1(S^3-F, P_+)\rightarrow \pi_1(S^3-F, P_-), 
\end{displaymath}
where the second map from $\pi_1(S^3-F, P_+)$ to $\pi_1(S^3-F, P_-)$ is the isomorphism induced by the arc $C$ connecting $P_-$ to $P_+$ as depicted in Figure $1$. 
By the Van Kampen Theorem,  we have 
 \begin{align}
  &\pi_1(X_K,P_-) = \label{equ:vkt} \\
  &\quad\pi_1(S^3-F, P_-)\ast <Z> / \ll Z \alpha_i^{+} Z^{-1}=\alpha_i^{-}, i=1, \dots, 2g \gg. \nonumber
 \end{align}

If the complement of the Seifert surface $F$ in $S^3$ is also a handlebody, which is always the case when $F$ is constructed through Seifert's algorithm,  then the group $\pi_1(S^3-F,P_-)$ is also free and  we assume that 
\begin{displaymath}
  \pi_1(S^3-F,P_-)=<x_1,\dots, x_{2g}>.
\end{displaymath}
In this case, from (\ref{equ:vkt}), we obtain  Lin's presentation for the knot group $\pi_1(X_K, P_-)$ \cite[Lemma 2.1]{lin} as follows: 
\begin{equation}
  \pi_1(X,P_-)=<x_1, x_2, \dots, x_{2g-1}, x_{2g}, Z: Z \alpha_i^+ Z^{-1}=\alpha_i^-,  i=1, \dots, 2g>, 
  \label{equ: lin}
\end{equation}
where $\alpha_i^\pm$ are words in $x_i$ as described above.

Let  $\widetilde{X_K}^{(n)}$ be the $n^{th}$ cyclic cover of the knot complement $X_K$.  Its fundamental group $\pi_1(\widetilde{X_K}^{(n)}) \cong\text{Ker}(\pi_1(X_K)\rightarrow \mathbb{Z}_n)$ is an index $n$ subgroup of the knot group $\pi_1(X_K)$. Choose $\{Z^i\}_{i=0,\dots, n-1}$ to be the representative from each coset.  By applying the Reidemeister-Schreier Method \cite{ls} to the presentation (\ref{equ: lin}), we obtain a presentation of the  group $\pi_1(\widetilde{X_K}^{(n)})$  with

\begin{itemize}
 \item generators:  $Z^n$ and 
 
 $Z^k x_1 Z^{-k}$, ... , $Z^{k} x_{2g} Z^{-k}$ for  $k= 0, \cdots, n-1$;
 \item  relators: 
  \begin{equation}
   Z^{k+1} \alpha_i^+ Z^{-(k+1)}= Z^k\alpha_i^-Z^{-k}, \text{ for }  k=0, \cdots, n-2 \text{ and } i=1, \dots, 2g,
   \label{rel:11}
  \end{equation}
  \begin{equation}
   Z^n \cdot \alpha_i^+ \cdot Z^{-n} = Z^{n-1}\alpha_i^-Z^{-(n-1)},  \text{ for } i=1, \dots, 2g.
   \label{rel:12}
  \end{equation}
 \end{itemize}
In the presentation above,  $Z^k x_i Z^{-k}$ and $Z^n$ should be viewed as abstract symbols  rather than products of $Z$ and $x_i$.  Thus, words  $ Z^k \alpha_i^+ Z^{-k}$ as in (\ref{rel:11}) are  products of the generators  $Z^k x_i Z^{-k}$ and the word $Z^n \cdot \alpha_i^+ \cdot Z^{-n}$ in (\ref{rel:12}) is the product of $Z^{\pm n}$ and $x_i$. The notation is chosen to emphasize the fact that the isomorphism between the presented group and the subgroup $\text{Ker}(\pi_1(X_K)\rightarrow \mathbb{Z}_n)$ is given by  sending the abstract symbol $Z^k x_i Z^{-k}$ in the presentation to the element $Z^k x_i Z^{-k}$ of the knot group $\pi_1(X_K)$ for $k=0, \dots, n-1$ and $i=1,\dots,2g$.

Intuitively, this presentation can be understood in the following way. The $n^{th}$ cyclic cover  $\widetilde{X_K^{(n)}}$ can be constructed by gluing $n$ copies of $S^3-F\times (-1,1)$ together. We denote each copy by $Y_k$. Let $F_k$ be the Seifert surface associated with $Y_k$ and $F^{\pm}_k$ be $F_k\times \pm 1$ on $\partial Y_k$ for $k=0,\dots, n-1$. Then $Z^k x_i Z^{-k}$ are generator loops in $Y_k$ and each relation $Z^{k+1} \alpha_i^+ Z^{-(k+1)}= Z^k\alpha_i^-Z^{-k}$ in (\ref{rel:11}) is due to  the isomorphism between $\pi_1(F_k^-)$ and $\pi_1(F_{k+1}^+)$. In addition, the relation (\ref{rel:12}) is from the identification between $F_0^+$ and $F_{n-1}^-$.

\medskip

Now let's look at the fundamental group of  the $n^{th}$ cyclic branched cover $X_K^{(n)}$. From the construction of $X_K^{(n)}$, we have the following  isomorphism  $$\pi_1(X_K^{(n)})\cong\text{Ker}(\pi_1(X_K)\rightarrow \mathbb{Z}_n)/\ll Z^n\gg.$$ Therefore, the group $\pi_1(X_K^{(n)})$ inherits  the presentation with
\begin{itemize}
 \item generators: $Z^k x_1 Z^{-k}$, ... , $Z^{k} x_{2g} Z^{-k}$ for $k= 0, \cdots, n-1$;
 \item relators: 
  \begin{equation}
   Z^{k+1} \alpha_i^+ Z^{-(k+1)}= Z^{k}\alpha_i^-Z^{-k}, \text{ for }  k=0, \cdots, n-2 \text{ and } i=1, \dots, 2g,   \label{rel:21}
  \end{equation}
  \begin{equation}
   \alpha_i^+ = Z^{n-1}\alpha_i^-Z^{-(n-1)}, \text{ for } i=1, \dots, 2g.
   \label{rel:22}
  \end{equation}
 \end{itemize}
 
 \bigskip
 
 \begin{lemma}
   Given a  knot $K$ in $S^3$,  denote by $Z$ a meridional element in the knot group $\pi_1(X_K)$. Suppose that there exists a group homomorphism $\rho$ from $\pi_1(X_K)$ to a group $G$ and  $\rho(Z^{n})$ is in the center of $G$.  Then  $\rho$ induces a group homomorphism from $\pi_1(X_K^{(n)})$ to $G$. In particular, if $\rho$ is non-abelian, then the induced homomorphism is nontrivial. 
   \label{lem:in}
 \end{lemma}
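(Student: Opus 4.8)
The plan is to work directly with the Reidemeister--Schreier presentation of $\pi_1(X_K^{(n)})$ displayed above, whose generators are the formal symbols $Z^k x_i Z^{-k}$ (for $k=0,\dots,n-1$, $i=1,\dots,2g$) and whose relators are \eqref{rel:21} and \eqref{rel:22}. I would define a candidate homomorphism $\bar\rho\colon \pi_1(X_K^{(n)})\to G$ on generators by
\[
\bar\rho\bigl(Z^k x_i Z^{-k}\bigr)=\rho(Z)^k\,\rho(x_i)\,\rho(Z)^{-k},
\]
so that $\bar\rho$ mimics $\rho$ under the identification of each generator with the genuine element of $\pi_1(X_K)$ it names. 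By the universal property of the presentation it then suffices to check that every relator maps to the identity. Since conjugation by $\rho(Z)^k$ is an automorphism of $G$, for any word $w$ in the $x_j$ the symbol $Z^k w Z^{-k}$ is sent to $\rho(Z)^k\rho(w)\rho(Z)^{-k}=\rho\bigl(Z^k w Z^{-k}\bigr)$, which makes the verification mechanical.

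The first family \eqref{rel:21} costs nothing: conjugating the defining relation $Z\alpha_i^+Z^{-1}=\alpha_i^-$ of $\pi_1(X_K)$ by $Z^k$ already gives $Z^{k+1}\alpha_i^+Z^{-(k+1)}=Z^k\alpha_i^-Z^{-k}$ inside $\pi_1(X_K)$, so $\rho$ equates the two sides automatically. All the content is in the relator \eqref{rel:22}, $\alpha_i^+=Z^{n-1}\alpha_i^-Z^{-(n-1)}$, and this is the step I expect to require the most care. Substituting $\alpha_i^-=Z\alpha_i^+Z^{-1}$ rewrites the right-hand side, inside $\pi_1(X_K)$, as
\[
Z^{n-1}\alpha_i^-Z^{-(n-1)}=Z^{n}\alpha_i^+Z^{-n};
\]
applying $\rho$ and using that $\rho(Z^n)$ is central gives $\rho(Z^n)\rho(\alpha_i^+)\rho(Z^n)^{-1}=\rho(\alpha_i^+)=\bar\rho(\alpha_i^+)$, so \eqref{rel:22} is respected. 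This is the one place the hypothesis enters: after the substitution, relator \eqref{rel:22} reduces to the demand that $\rho(Z^n)$ commute with each $\rho(\alpha_i^+)$, and centrality is precisely tailored to supply this. Hence $\bar\rho$ is the desired well-defined homomorphism $\pi_1(X_K^{(n)})\to G$.

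For the final assertion I would note that the image of $\bar\rho$ contains $\bar\rho(x_i)=\rho(x_i)$ for every $i$ (take $k=0$). Were $\bar\rho$ trivial, every $\rho(x_i)$ would equal $1$; but $\pi_1(X_K)$ is generated by $x_1,\dots,x_{2g}$ together with $Z$, so then $\rho(\pi_1(X_K))$ would be generated by the single element $\rho(Z)$, hence cyclic and abelian --- contradicting the non-abelianity of $\rho$. Therefore some $\rho(x_i)\neq 1$ and $\bar\rho$ is nontrivial.

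The expected difficulty is thus not analytic but a matter of bookkeeping: since $Z^k x_i Z^{-k}$ and $Z^n$ are formal symbols, I must stay scrupulous about whether an identity such as $Z^{n-1}\alpha_i^-Z^{-(n-1)}=Z^{n}\alpha_i^+Z^{-n}$ is being asserted in the abstract presentation or in the honest knot group $\pi_1(X_K)$, where the defining relation is available. The presentation itself presupposes a Seifert surface whose complement is a handlebody, which Seifert's algorithm always furnishes, so assuming the free presentation \eqref{equ: lin} entails no loss of generality.
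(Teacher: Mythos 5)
Your proposal is correct and takes essentially the same route as the paper: the paper likewise sends each Reidemeister--Schreier generator $Z^kx_iZ^{-k}$ to its image under $\rho$, notes that the relators \eqref{rel:21} hold automatically, and disposes of \eqref{rel:22} by combining the unbranched-cover relation $Z^n\alpha_i^+Z^{-n}=Z^{n-1}\alpha_i^-Z^{-(n-1)}$ (which is exactly the identity you obtain by substituting $\alpha_i^-=Z\alpha_i^+Z^{-1}$) with the centrality of $\rho(Z^n)$. The nontriviality argument is also the same in substance; the paper phrases it via the fact that the commutator subgroup is normally generated by the $x_i$, while you argue directly that the image would otherwise be the cyclic group generated by $\rho(Z)$.
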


\begin{proof}
Let $\rho|_{ker}$ be the restriction of  $\rho$ to the subgroup Ker$(\pi_1(X_K)\rightarrow \mathbb{Z}_n)$.  
We are going to show that the assignment 
 \begin{displaymath}
   Z^kx_iZ^{-k} \mapsto \rho|_{ker}(Z^kx_iZ^{-k}) \text{ for } i=1, \dots, 2g \text{ and } k=0,\dots, n-1  
 \end{displaymath}
also defines a homomorphism from $\pi_1(X_K^{(n)})$ to $G$.

First of all, the relations in (\ref{rel:11}) which are the same as the relations in (\ref{rel:21}) automatically hold. 
 It follows from (\ref{rel:12}) that  
\begin{displaymath}
   \rho|_{ker}(Z^n)\cdot \rho|_{ker}(\alpha_i^+)\cdot \rho|_{ker}(Z^{-n}) = \rho|_{ker}(Z^{n-1}\alpha_i^-Z^{-(n-1)}).
\end{displaymath}
Since by assumption $\rho|_{ker}(Z^n)=\rho(Z^n)$ is in the center of $G$, we have 
\begin{displaymath}
\rho|_{ker}(\alpha_i^+)= \rho|_{ker}(Z^n)\cdot \rho|_{ker}(\alpha_i^+)\cdot \rho|_{ker}(Z^{-n}) = \rho|_{ker}(Z^{n-1}\alpha_i^-Z^{-(n-1)}).
\end{displaymath}
That is, the relations in (\ref{rel:22}) hold as well. 

In addition,  if $\rho$ is a non-abelian homomorphism, since the commutator subgroup\\ $[\pi_1(X_K),\pi_1(X_K)]$ is the normal subgroup generated by $\{x_1, \dots, x_{2g}\}$, we have that $\rho(x_i)$ is not equal to the identity in $G$ for some $i$. Therefore, the induced homomorphism from $\pi_1(X_K^{(n)})$ to $G$ is nontrivial. 
 \end{proof}

\section{The left-orderability of the fundamental group $\pi_1(X_K^{(n)})$}

We finish the proof of Theorem \ref{thm:cri} in this section.  
\begin{theorem}
  Given any prime knot $K$ in $S^3$, denote by  $Z$ a meridional element of 
 $\pi_1(X_K)$. If there exists a non-abelian representation $\pi_1(X_K)$ to $SL(2,\mathbb{R})$ such that  $Z^n$ is sent to $\pm I$ then the fundamental group $\pi_1(X_K^{(n)})$ is left-orderable. 
 \label{thm:cri}
\end{theorem}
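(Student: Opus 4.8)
The plan is to lift the given representation to the universal cover $\widetilde{SL(2,\mathbb{R})}$, push the lift down to $\pi_1(X_K^{(n)})$ by means of Lemma \ref{lem:in}, and then read off left-orderability from the fact that $\widetilde{SL(2,\mathbb{R})}$ is itself left-orderable. Concretely, I would use the central extension
$$0 \longrightarrow \mathbb{Z} \longrightarrow \widetilde{SL(2,\mathbb{R})}\longrightarrow SL(2,\mathbb{R})\longrightarrow 1,$$
whose projection $p$ has kernel $\ker p = \pi_1(SL(2,\mathbb{R})) \cong \mathbb{Z}$ sitting in the center. First I would lift $\rho$ to a homomorphism $\tilde\rho \colon \pi_1(X_K) \to \widetilde{SL(2,\mathbb{R})}$ with $p\circ\tilde\rho = \rho$. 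The obstruction to such a lift lives in $H^2(\pi_1(X_K);\mathbb{Z})$, and this group vanishes because the knot exterior $X_K$ is aspherical with $H_1(X_K)=\mathbb{Z}$ and $H_2(X_K)=0$, whence $H^2(X_K;\mathbb{Z})=0$. Since $p\circ\tilde\rho=\rho$ is non-abelian, $\tilde\rho$ is non-abelian as well.

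Next I would check that $\tilde\rho(Z^n)$ is central in $\widetilde{SL(2,\mathbb{R})}$. The center of $SL(2,\mathbb{R})$ is $\{\pm I\}$, and the preimage $p^{-1}(\{\pm I\})$ is exactly the center $Z(\widetilde{SL(2,\mathbb{R})}) \cong \mathbb{Z}$ of the universal cover. By hypothesis $\rho(Z^n) = \pm I$, so $p(\tilde\rho(Z^n)) = \pm I$ and therefore $\tilde\rho(Z^n) \in Z(\widetilde{SL(2,\mathbb{R})})$. Lemma \ref{lem:in}, applied to the non-abelian homomorphism $\tilde\rho$ with target $G = \widetilde{SL(2,\mathbb{R})}$, then produces a nontrivial homomorphism $\bar\rho \colon \pi_1(X_K^{(n)}) \to \widetilde{SL(2,\mathbb{R})}$.

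It remains to promote $\bar\rho$ to left-orderability of $\pi_1(X_K^{(n)})$. Since $SL(2,\mathbb{R})$ and $PSL(2,\mathbb{R})$ share the universal cover $\widetilde{SL(2,\mathbb{R})}$, and $PSL(2,\mathbb{R})$ acts faithfully and orientation-preservingly on the circle $S^1=\partial\mathbb{H}^2$, the group $\widetilde{SL(2,\mathbb{R})}$ embeds in $\widetilde{Homeo_+(S^1)}$ and thus acts faithfully on $\mathbb{R}$ by orientation-preserving homeomorphisms; in particular it is left-orderable. I would then apply the criterion of Boyer--Rolfsen--Wiest \cite{brw}, that the fundamental group of a compact connected $P^2$-irreducible $3$-manifold is left-orderable whenever it admits a nontrivial homomorphism to a left-orderable group, to $M = X_K^{(n)}$ together with $\bar\rho$. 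The step I expect to be the main obstacle is verifying the hypothesis of this criterion, namely that $X_K^{(n)}$ is irreducible. This is precisely where the primeness of $K$ enters: via the equivariant sphere theorem, an essential sphere in the branched cover could be taken equivariant and would descend to a connected-sum decomposition of $K$, contradicting primeness, so $X_K^{(n)}$ contains no essential sphere. Being a closed orientable $3$-manifold, $X_K^{(n)}$ then has its irreducibility upgrade automatically to $P^2$-irreducibility, the criterion applies, and $\pi_1(X_K^{(n)})$ is left-orderable.
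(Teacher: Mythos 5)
Your proposal is correct and follows essentially the same route as the paper: lift $\rho$ to $\widetilde{SL(2,\mathbb{R})}$ using the vanishing of $H^2(\pi_1(X_K);\mathbb{Z})$, observe that $\tilde\rho(Z^n)$ lands in the center, apply Lemma \ref{lem:in} to get a nontrivial homomorphism from $\pi_1(X_K^{(n)})$ to the left-orderable group $\widetilde{SL(2,\mathbb{R})}$, and conclude via the Boyer--Rolfsen--Wiest criterion together with irreducibility of the branched cover of a prime knot. The only difference is cosmetic: you unpack the irreducibility step via the equivariant sphere theorem where the paper simply cites the literature.
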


We will make use of the following criterion due to Boyer-Rolfsen-Wiest.   \begin{theorem}[\cite{brw}]
 Let $M$ be a compact, orientable, irreducible $3$-manifold. Then $\pi_1(M)$ is left-orderable, if there exists a nontrivial homomorphism from $\pi_1(M)$ to a left-orderable group.
 \label{thm:brw}
\end{theorem}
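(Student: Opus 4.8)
The plan is to avoid ordering the extension $1 \to N \to \pi_1(M) \to H \to 1$ directly (which would require a conjugation-invariant order on the kernel) and instead reduce everything to a purely local criterion. Specifically, I would invoke the theorem of Burns and Hale: a group is left-orderable if and only if every nontrivial finitely generated subgroup admits a nontrivial left-orderable quotient. Combined with Theorem \ref{thm:brw}'s own input, it then suffices to prove that every nontrivial finitely generated subgroup $G_0 \le \pi_1(M)$ surjects onto a nontrivial left-orderable group. Throughout, write $\phi$ for the given nontrivial homomorphism, set $H = \phi(\pi_1(M))$, which is a nontrivial and hence infinite, torsion-free left-orderable group, and set $N = \ker\phi$.

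First I would dispose of the subgroups that $\phi$ detects. If $\phi(G_0) \neq 1$, then $\phi$ restricts to a surjection $G_0 \twoheadrightarrow \phi(G_0)$, and $\phi(G_0)$ is a nontrivial subgroup of the left-orderable group $H$, hence itself a nontrivial left-orderable quotient of $G_0$. Thus the Burns--Hale condition holds automatically for every such $G_0$, using nothing beyond the hypothesis.

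The substantive case is $\phi(G_0) = 1$, that is, $G_0 \le N$. Here I would pass to the covering space $M_0 \to M$ with $\pi_1(M_0) = G_0$; since $M$ is orientable and irreducible, so is $M_0$. I would first verify that $G_0$ has infinite index in $\pi_1(M)$: otherwise $\phi(G_0)$ would be a finite-index, hence infinite, subgroup of $H$, contradicting $\phi(G_0)=1$. Consequently $M_0$ is noncompact. As $G_0$ is finitely generated, Scott's compact core theorem supplies a compact connected submanifold $C_0 \hookrightarrow M_0$ inducing an isomorphism on $\pi_1$, and noncompactness of $M_0$ forces $\partial C_0 \neq \emptyset$. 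The boundary cannot consist solely of $2$-spheres, for a connected compact orientable irreducible $3$-manifold with only spherical boundary is a ball, which would force $G_0 = 1$; hence some component of $\partial C_0$ has positive genus. The half-lives-half-dies principle (Poincar\'e--Lefschetz duality) then shows that the image of $H_1(\partial C_0;\mathbb{Q}) \to H_1(C_0;\mathbb{Q})$ has positive rank, so $b_1(C_0) > 0$ and $G_0 \cong \pi_1(C_0)$ surjects onto $\mathbb{Z}$, a nontrivial left-orderable quotient.

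With the Burns--Hale condition verified in both cases, $\pi_1(M)$ is left-orderable. I expect the main obstacle to lie entirely in this last case: one must correctly invoke the compact core theorem, confirm that irreducibility lifts to the possibly infinite cover $M_0$ (via the Sphere Theorem, using that $M_0$ shares the contractible universal cover of $M$ when $\pi_1(M)$ is infinite, the finite case being vacuous), and rule out the degenerate all-spheres boundary, so that the duality estimate genuinely yields a surjection onto $\mathbb{Z}$. Everything else is bookkeeping around the given homomorphism.
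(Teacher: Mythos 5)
The paper offers no proof of this statement: it is quoted verbatim from \cite{brw}, so the only meaningful comparison is with Boyer--Rolfsen--Wiest's original argument. Your proposal is essentially that argument. They too reduce to the Burns--Hale criterion, handle subgroups not killed by the given homomorphism $\phi$ by restriction, and in the case $G_0\le\ker\phi$ observe that $G_0$ has infinite index (since $\phi(\pi_1(M))$ is a nontrivial left-orderable, hence torsion-free and infinite, group), pass to the noncompact cover $M_0$, take a Scott compact core, discard spherical boundary, and use half-lives-half-dies to produce a surjection of $G_0$ onto $\mathbb{Z}$. So the route is the same as the cited source, and the overall structure is correct.

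Two steps need repair, both of which you yourself flagged as the likely obstacles. First, irreducibility of the cover $M_0$: your sketch via the Sphere Theorem and the contractible universal cover only yields $\pi_2(M_0)=0$, and passing from ``every embedded sphere is null-homotopic'' to ``every embedded sphere bounds a ball'' is not formal; classically this is precisely the content of the Meeks--Simon--Yau theorem that covering spaces of irreducible orientable $3$-manifolds are irreducible, which is the result \cite{brw} invoke (post-Perelman one can instead argue that a null-homotopic embedded sphere separates and bounds a homotopy ball, hence a ball). Second, your exclusion of the all-spheres boundary applies the lemma ``compact, orientable, irreducible with only spherical boundary implies ball'' to $C_0$, but a compact core need not be irreducible even when $M_0$ is: a sphere in $C_0$ bounds a ball in $M_0$ that may not lie in $C_0$. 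The standard fix, and the one in \cite{brw}, is to cap: each sphere component $S$ of $\partial C_0$ bounds a ball $B\subset M_0$ by irreducibility of $M_0$; if $C_0\subset B$ then $G_0=1$, a contradiction, and otherwise $C_0\cup B$ is again a core with one fewer boundary sphere (gluing a ball along a boundary sphere does not change $\pi_1$). After finitely many steps one obtains a core whose boundary is nonempty --- a closed codimension-zero submanifold of the connected noncompact $M_0$ would be all of $M_0$ --- and has no sphere components, after which your duality estimate goes through verbatim. With these two patches the proof is complete.
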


Note that the group $SL(2,\mathbb{R})$ itself is not left-orderable, but its universal covering group, denoted by $\widetilde{SL(2,\mathbb{R})}$, is left-orderable \cite{ber}. 
  Let $E$ be the covering map from $\widetilde{SL(2,\mathbb{R})}$ to $SL(2,\mathbb{R})$. Since $\widetilde{SL(2,\mathbb{R})}$ and $SL(2,\mathbb{R})$  are both connected, we have 
  \begin{displaymath}
   \mathcal{Z}(\widetilde{SL(2,\mathbb{R})})= E^{-1}(\mathcal{Z}(SL(2,\mathbb{R}))), 
  \end{displaymath}
where $\mathcal{Z}(\widetilde{SL(2,\mathbb{R})})$ and $\mathcal{Z}(SL(2,\mathbb{R}))$ are the centers of the Lie groups $\widetilde{SL(2,\mathbb{R})}$ and $SL(2,\mathbb{R})$ respectively \cite[p.~336]{hn}.  Therefore, $\mathcal{Z}(\widetilde{SL(2,\mathbb{R})})=E^{-1}(\{\pm I\})$. 

\begin{lemma}
Given any knot $K$ in $S^3$, let $Z$  be a meridional element in  the knot group
 $\pi_1(X_K)$. Suppose that  there exists a non-abelian $SL(2,\mathbb{R})$ representation of $\pi_1(X_K)$ such that  $Z^n$ is sent to $\pm I$. Then this representation induces a nontrivial $\widetilde{SL(2,\mathbb{R})}$ representation of  the fundamental group of the $n^{th}$ cyclic branched cover $\pi_1(X_K^{(n)})$ . 
 \label{lem:rep}
\end{lemma}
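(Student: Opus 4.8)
The plan is to lift the given $SL(2,\mathbb{R})$ representation to $\widetilde{SL(2,\mathbb{R})}$ and then invoke Lemma \ref{lem:in}. Let $\rho\colon \pi_1(X_K)\to SL(2,\mathbb{R})$ be the given non-abelian representation with $\rho(Z^n)=\pm I$. Since $\pi_1(X_K)$ is the knot group, it admits a presentation (for instance Lin's presentation \eqref{equ: lin}) in which the relators express how the meridian $Z$ conjugates the free generators $x_i$; the key structural fact I would use is that $H^2(\pi_1(X_K);\mathbb{Z})$ vanishes, or more concretely that a knot complement $X_K$ is an aspherical space that is homotopy equivalent to a $2$-complex, so obstructions to lifting live in $H^2$ and vanish. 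Thus the first step is: because $\widetilde{SL(2,\mathbb{R})}\to SL(2,\mathbb{R})$ is a central $\mathbb{Z}$-extension and $X_K$ has the homotopy type of a $2$-dimensional CW complex, the representation $\rho$ lifts to a homomorphism $\widetilde{\rho}\colon\pi_1(X_K)\to\widetilde{SL(2,\mathbb{R})}$ with $E\circ\widetilde{\rho}=\rho$.

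Once the lift $\widetilde{\rho}$ is in hand, the second step is to verify the centrality hypothesis required by Lemma \ref{lem:in}. Since $\rho(Z^n)=\pm I\in\mathcal{Z}(SL(2,\mathbb{R}))$, its lift satisfies $\widetilde{\rho}(Z^n)=\widetilde{\rho}(Z)^n\in E^{-1}(\{\pm I\})=\mathcal{Z}(\widetilde{SL(2,\mathbb{R})})$, using the identification of centers established immediately before the lemma statement. Therefore $\widetilde{\rho}(Z^n)$ is central in $\widetilde{SL(2,\mathbb{R})}$, which is exactly the hypothesis of Lemma \ref{lem:in} with $G=\widetilde{SL(2,\mathbb{R})}$ and $\rho$ replaced by $\widetilde{\rho}$. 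Applying Lemma \ref{lem:in} then yields an induced homomorphism $\pi_1(X_K^{(n)})\to\widetilde{SL(2,\mathbb{R})}$.

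The final step is to confirm that the induced homomorphism is nontrivial, and for this Lemma \ref{lem:in} reduces the problem to checking that $\widetilde{\rho}$ is non-abelian. This follows because $E\circ\widetilde{\rho}=\rho$ is non-abelian by hypothesis: if $\widetilde{\rho}$ had abelian image, then so would its composition with the homomorphism $E$, contradicting that $\rho$ is non-abelian. Hence $\widetilde{\rho}$ is non-abelian and Lemma \ref{lem:in} guarantees the induced representation of $\pi_1(X_K^{(n)})$ into $\widetilde{SL(2,\mathbb{R})}$ is nontrivial.

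The main obstacle I anticipate is rigorously justifying the lifting in the first step. The cleanest argument uses that the obstruction to lifting a map into the base of a central extension $0\to\mathbb{Z}\to\widetilde{SL(2,\mathbb{R})}\to SL(2,\mathbb{R})\to 1$ along $\rho$ is a class in $H^2(\pi_1(X_K);\mathbb{Z})$ pulled back from the Euler class of the extension, and this group vanishes because $X_K$ deformation retracts onto a $2$-complex with free $H_2$ but — more to the point — because the boundary $\partial X_K$ is a torus and a standard computation gives $H^2(X_K;\mathbb{Z})=0$ (equivalently one can lift directly on Lin's presentation by choosing arbitrary lifts of the generators $\widetilde{\rho}(x_i)$ and $\widetilde{\rho}(Z)$ and checking the relators lift, since each relator is a commutator-type word whose image is automatically $I$ and whose unique lift over $I$ is the identity). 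I would present the presentation-based verification as the concrete route, noting that the only subtlety is ensuring the conjugation relators $Z\alpha_i^+Z^{-1}=\alpha_i^-$ lift consistently, which they do because the central kernel $\mathbb{Z}$ is killed by any relator that maps to the identity in $SL(2,\mathbb{R})$.
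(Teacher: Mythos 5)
Your proposal is correct and follows essentially the same route as the paper: lift $\rho$ to $\widetilde{SL(2,\mathbb{R})}$ using the vanishing of $H^2(\pi_1(X_K);\mathbb{Z})\cong H^2(X_K;\mathbb{Z})$ for the central extension, observe that $\tilde{\rho}(Z^n)\in E^{-1}(\{\pm I\})=\mathcal{Z}(\widetilde{SL(2,\mathbb{R})})$, note that $\tilde{\rho}$ is non-abelian because $E\circ\tilde{\rho}=\rho$ is, and apply Lemma \ref{lem:in}. The only cosmetic difference is that the paper realizes the lift explicitly via a splitting of the pullback extension $\widetilde{SL(2,\mathbb{R})}\times_{SL(2,\mathbb{R})}\pi_1(X_K)$ rather than via your (equally valid) obstruction-theoretic phrasing.
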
 
\begin{proof}
The kernel of the covering map $Ker(E)$ is isomorphic to $\pi_1(SL(2,\mathbb{R}))\cong \mathbb{Z}$ and we have the following central extension
  \begin{displaymath}
   0 \longrightarrow \mathbb{Z} \longrightarrow \widetilde{SL(2,\mathbb{R})}\longrightarrow SL(2,\mathbb{R})\longrightarrow I.
 \end{displaymath}
  Suppose that  $\rho$ is a representation of $\pi_1(X_K)$ into $SL(2,\mathbb{R})$. Then the pullback 
  \begin{displaymath}
    \widetilde{SL(2,\mathbb{R})}\times_{SL(2,\mathbb{R})} \pi_1(X_K) =\{(M, x)\in  \widetilde{SL(2,\mathbb{R})}\times \pi_1(X_K): E(M)=\rho(x)\}, 
  \end{displaymath}
 is a central extension of $\pi_1(X)$ by $\mathbb{Z}$. 
 On the other hand,  
 $$H^2(\pi_1(X_K),\mathbb{Z})\cong H^2(X_K,\mathbb{Z})=0,$$ 
so every central extension of $\pi_1(X_k)$ by $\mathbb{Z}$ splits.  Hence,  $\rho$ can be  lifted to a representation into $\widetilde{SL(2,\mathbb{R})}$.  That is, the composition of a splitting map with the projection from $ \widetilde{SL(2,\mathbb{R})}\times_{SL(2,\mathbb{R})} \pi_1(X_K)$ to $\widetilde{SL(2,\mathbb{R})}$ is a lifting of $\rho$ \cite{wei} (also see \cite{ghy}).

Now assume that the representation $\rho$ of the knot group $\pi_1(X_K)$ satisfies the property  $\rho(Z^n)=\pm I$.  We  denote by $\tilde{\rho}$ a lifting of $\rho$.  Since $\rho(Z^n)=\pm I$,  we have $\tilde{\rho}(Z^n)$ is inside $E^{-1}(\pm I)$, which is equal to $\mathcal{Z}(\widetilde{SL(2,\mathbb{R})})$, the center of $\widetilde{SL(2,\mathbb{R})}$.
 \begin{center}
\setlength{\unitlength}{0.8cm}
\begin{picture}(14,3.5)
\thicklines
\put(6,1.7){$\tilde{\rho}$}
\put(8.2,2.4){$\widetilde{SL(2,\mathbb{R})}$}
\put(8.7,2.2){\vector(0,-1){1.4}}
\put(9,1.5){$E$}
\put(3,0.3){$\pi_1(X_K,P_-)$}
\put(5,0.8){\vector(2,1){3}}
\put(8.2,0.2){$SL(2,\mathbb{R})$}
\put(5.5,0.5){\vector(1,0){2.5}}
\put(7,0.7){$\rho$}
\end{picture}
\end{center}
In addition,  if $\rho$ is a non-abelian representation, then $\tilde{\rho}$ is non-abelian. By Lemma \ref{lem:in}, the representation  $\tilde{\rho}$ induces a nontrivial  $\widetilde{SL(2,\mathbb{R})}$ representation of $\pi_1(X_K^{(n)})$. 
\end{proof}

\begin{proof}[Proof of Theorem \ref{thm:cri} ]
Let $\rho$ be a non-abelian $SL(2,\mathbb{R})$ representation of the knot group $\pi_1(X_K)$, with $\rho(Z^n)=\pm I$. By Lemma \ref{lem:rep}, this representation induces a nontrivial   $\widetilde{SL(2,\mathbb{R})}$ representation of the group $\pi_1(X_K^{(n)})$.

 The  group $\widetilde{SL(2,\mathbb{R})}$ can be  embedded inside the group of order-preserving homeomorphisms of $\mathbb{R}$, so it is left-orderable  \cite{ber}. Moreover, the $n^{th}$ cyclic branched cover $X_K^{(n)}$ is irreducible if $K$ is a prime knot \cite{plo}. 
 Thus,  Theorem \ref{thm:cri} follows from Theorem \ref{thm:brw}.
\end{proof}

 \section{An Application to $(p,q)$ two-bridge knots, with $p\equiv 3$ mod $4$}
 In this section  we  apply  Theorem \ref{thm:cri} to $(p,q)$ two-bridge knots with $p=3 \text{ mod } 4$. 
 We show that given any two-bridge knot of this type, the fundamental group of the $n^{th}$ cyclic branched cover  is  left-orderable  if  $n$  is sufficiently large. 
 
 Let $K$ be a $(p,q)$ two-bridge knot.  From the Schubert normal form \cite[p.~21]{kaw},  the knot group has a presentation of the following form:
 \begin{displaymath}
  \pi_1(X_K)=<x,y: wx=yw>, 
   \end{displaymath}
 \begin{displaymath}
  \text{ where }
 w=(x^{\epsilon_1}y^{\epsilon_2})\dots (x^{\epsilon_{p-2}}y^{\epsilon_{p-1}}) \text{ and } \epsilon_i=\pm 1.
 \end{displaymath}

 Set $\rho:\pi_1(X_K)\rightarrow SL(2, \mathbb{C})$ be a non-abelian  representation of the knot group into $SL(2,\mathbb{C})$. Up to conjugation, we can assume that
\begin{equation}
\rho(x)= \begin{pmatrix}
 m & 1\\
 0& m^{-1}\\
 \end{pmatrix},\quad \rho(y)=\begin{pmatrix}
  m&0\\
  s& m^{-1}\\
 \end{pmatrix}.
 \label{equ:rep}
\end{equation}

Hence, $\rho(w)=\rho(x)^{\epsilon_1}\rho(y)^{\epsilon_2} \dots \rho(x)^{\epsilon_{p-2}}\rho(y)^{\epsilon_{p-1}}$ is a matrix with entries in $\mathbb{Z}[m^{\pm 1}, s]$. Denote $\rho(w)=\begin{pmatrix}
 w_{11} & w_{12}\\
 w_{21} & w_{22}
\end{pmatrix}$,  $w_{ij}\in \mathbb{Z}[m^{\pm 1}, s]$.  

From the group relation $wx=yw $, we have
 \begin{displaymath}
 \begin{pmatrix}
 w_{11} & w_{12}\\
 w_{21} & w_{22}
\end{pmatrix} \begin{pmatrix}
 m & 1\\
 0& m^{-1}\\
 \end{pmatrix}=\begin{pmatrix}
  m&0\\
  s& m^{-1}\\
 \end{pmatrix}\begin{pmatrix}
 w_{11} & w_{12}\\
 w_{21} & w_{22}
\end{pmatrix}. 
\end{displaymath}

This is equivalent to 
\begin{equation}
\begin{pmatrix}
   0 & w_{11}+(m^{-1}-m)w_{12}\\
   (m-m^{-1})w_{21}-sw_{11} & w_{21}-sw_{12}
 \end{pmatrix}=0
\end{equation}

and  hence $s$ and $m$ must satisfy the equation 
 $$w_{11}+(m^{-1}-m)w_{12}=0.$$ 
 
 In \cite{ril2}, it is shown that the above equation  is also a sufficient condition. 
\begin{proposition}[Theorem 1 of \cite{ril2}]
  The assignment of $x$ and $y$ as in (\ref{equ:rep}) defines a non-abelian $SL(2,\mathbb{C})$ representation of the knot group $$\pi_1(X_K)=<x,y: wx=yw>$$  if and only if 
  \begin{equation}
    \varphi(m,s)\triangleq w_{11}+(m^{-1}-m)w_{12}=0.
      \label{equ:ril}
  \end{equation}
  \label{prop:ril}
\end{proposition}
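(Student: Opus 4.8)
The plan is to exploit that $\pi_1(X_K)$ has a single defining relator, so that $\rho$ extends to a representation precisely when the matrix equation $\rho(w)\rho(x)=\rho(y)\rho(w)$ holds, and then to show that this system of scalar equations collapses to the single equation $\varphi(m,s)=0$. Writing $\rho(w)=\begin{pmatrix}w_{11}&w_{12}\\ w_{21}&w_{22}\end{pmatrix}$, the computation already carried out in the excerpt shows that $\rho(w)\rho(x)=\rho(y)\rho(w)$ is equivalent to the three scalar conditions
\begin{align*}
\text{(a)}\quad & w_{11}+(m^{-1}-m)w_{12}=0,\\
\text{(b)}\quad & w_{21}-s\,w_{12}=0,\\
\text{(c)}\quad & (m-m^{-1})w_{21}-s\,w_{11}=0.
\end{align*}
The forward implication is then immediate: if the assignment is a representation all three vanish, in particular $\varphi(m,s)=w_{11}+(m^{-1}-m)w_{12}=0$. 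The whole problem is therefore to prove the converse, namely that $\varphi=0$ already forces (b) and (c).

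The key step, and the one I expect to be the main obstacle, is to establish that (b) holds identically in $\mathbb{Z}[m^{\pm1},s]$, independently of (a). This is where the special structure of two-bridge words enters through the palindromic exponent relation $\epsilon_i=\epsilon_{p-i}$. Because of it, the word $w$ is invariant under reversing its letters while simultaneously interchanging $x\leftrightarrow y$; in symbols $r(w)=\tau(w)$, where $r$ is the reversal anti-automorphism of the free group and $\tau$ is the swap automorphism. I would realize both operations at the matrix level. Reversal is implemented by transposition, $\rho(w)^{T}=\rho'(r(w))$ where $\rho'(g)=\rho(g)^{T}$, while the swap is implemented by a diagonal conjugation $D=\mathrm{diag}(\lambda,\lambda^{-1})$ with $\lambda^{2}=s^{-1}$, for which one checks directly that $D\rho(x)^{T}D^{-1}=\rho(y)$ and $D\rho(y)^{T}D^{-1}=\rho(x)$. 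Combining these identities with $r(w)=\tau(w)$ yields the symmetry
$$\rho(w)=D\,\rho(w)^{T}D^{-1},$$
whose off-diagonal entries read off exactly as $w_{21}=s\,w_{12}$, i.e. relation (b). (The square-root ambiguity and the locus $s=0$ are harmless, since $w_{21}-s\,w_{12}$ is a polynomial vanishing on the Zariski-dense set $s\neq0$ and hence everywhere.) The delicate points here are verifying the word-level identity $r(w)=\tau(w)$ from the palindromic exponent sequence, and confirming that transpose followed by diagonal conjugation genuinely intertwines the two generators as claimed.

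With (b) available as an identity, the remaining algebra is routine. Relation (c) follows from (a) and (b): using (b) and then (a),
$$(m-m^{-1})w_{21}=(m-m^{-1})s\,w_{12}=s\,(m-m^{-1})w_{12}=s\,w_{11}.$$
Hence $\rho(w)\rho(x)=\rho(y)\rho(w)$ holds if and only if $\varphi(m,s)=0$, which is the asserted equivalence. Finally, to justify the word ``non-abelian'' in the statement I would verify that whenever $\varphi=0$ the matrices $\rho(x),\rho(y)$ do not commute: a direct computation shows that they commute only in the degenerate case $s=0$ and $m^{2}=1$, which is excluded, as it corresponds to an abelian representation and does not arise as a genuine solution of $\varphi=0$ for a knot. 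This completes the plan.
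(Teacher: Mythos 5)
Your argument is correct in outline, but note that the paper does not actually prove this proposition: it establishes only the ``only if'' direction by the matrix computation you also reproduce (the vanishing of the entries of $\rho(w)\rho(x)-\rho(y)\rho(w)$), and then cites \cite{ril2} for the converse. What you supply is precisely the missing sufficiency argument, and it is essentially Riley's original one: the palindromic symmetry $\epsilon_i=\epsilon_{p-i}$ of the Schubert relator word forces the identity $w_{21}=s\,w_{12}$ in $\mathbb{Z}[m^{\pm1},s]$, after which the $(2,1)$-entry condition follows from the $(1,2)$-entry condition exactly as you compute. Your mechanism for extracting the identity --- transposition realizes word reversal, conjugation by $\mathrm{diag}(\lambda,\lambda^{-1})$ with $\lambda^{2}=s^{-1}$ realizes the swap $x\leftrightarrow y$, and the composite fixes $\rho(w)$ --- checks out, and the Zariski-density remark disposes of the $s=0$ locus. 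Two small points should be made explicit. First, the palindromic property $\epsilon_i=\epsilon_{p-i}$ is not automatic from the displayed form of $w$: with $\epsilon_i=(-1)^{\lfloor iq/p\rfloor}$ one gets $\epsilon_{p-i}=(-1)^{q-1}\epsilon_i$, so you need $q$ odd, which is part of the normalization of the Schubert form for a knot. Second, for the ``non-abelian'' qualifier, the clean way to exclude $(m,s)=(\pm1,0)$ from the zero locus of $\varphi$ is to observe that at such a point $\rho(x)$ is $\pm$ a nontrivial parabolic while $\rho(y)=\pm I$, so they are not conjugate in $SL(2,\mathbb{C})$, whereas the relation $wxw^{-1}=y$ forces $\rho(x)$ and $\rho(y)$ to be conjugate; this is the same observation the paper uses in Lemma~\ref{lem:ril}(2) for $m=1$, and it avoids any circularity since it only invokes the ``if'' direction you have already established. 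With those two points nailed down, your proof is complete and self-contained, which is more than the paper itself offers for this statement.
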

\bigskip
 
We need to make use of several properties of the polynomial $\varphi(m,s)$. All of these properties are either proven or claimed throughout Riley's paper \cite{ril2}.  For readers' convenience, we organize them and provide a proof in the following lemma.
 
\begin{lemma}[cf. \cite{ril2}]
The polynomial $\varphi(m,s)$  in $\mathbb{Z}[m^{\pm 1}, s]$ satisfies the following: 
\begin{enumerate}
 \item As a polynomial in $s$ with coefficients in $\mathbb{Z}[m^{\pm 1}]$, $\varphi(m,s)$ has $s$-degree  equal to $\frac{p-1}{2}$, with the leading coefficient $\pm 1$.
  \item $\varphi(1,0)\neq 0$.
  \item $\varphi(m,s)$ does not have repeated factors. 
 \item $\varphi(m,s)=\varphi(m^{-1},s)$ and thus $\varphi(m,s)=f(m+m^{-1},s)$ where $f$ is a two-variable polynomial with coefficients in $\mathbb{Z}$. 
\end{enumerate}
\label{lem:ril}
\end{lemma}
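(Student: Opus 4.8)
The plan is to read all four statements off the explicit matrix $\rho(w)$, exploiting that the variable $s$ enters only through the lower-left entries of $\rho(y)^{\pm1}$. Observe that $\rho(x)^{\pm1}$ is upper triangular, with vanishing $(2,1)$-entry and off-diagonal entry $\pm1$, while $\rho(y)^{\pm1}$ is lower triangular with $(2,1)$-entry $\pm s$. Writing $\rho(w)=M_1\cdots M_{p-1}$ with $M_j=\rho(x)^{\epsilon_j}$ for $j$ odd and $M_j=\rho(y)^{\epsilon_j}$ for $j$ even, I would expand each entry of $\rho(w)$ as a sum over index sequences $a=i_0,i_1,\dots,i_{p-1}=b$ of the products $\prod_j (M_j)_{i_{j-1}i_j}$, and note that a factor of $s$ is collected precisely when a $y$-letter is traversed from row $2$ to column $1$. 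For \emph{(1)}, in $w_{11}$ (paths from $1$ to $1$) the only way to collect a factor of $s$ from all $\tfrac{p-1}{2}$ of the $y$-letters is the unique forced path $1\to2\to1\to2\to\cdots\to1$, which contributes $\pm s^{(p-1)/2}$; as this path is unique there is no cancellation, so the leading $s$-coefficient of $w_{11}$ is $\pm1$. For $w_{12}$ (paths ending at column $2$) the final $y$-letter $M_{p-1}$ must exit through its $(2,2)$-entry and hence contributes no $s$, giving $\deg_s w_{12}\le\tfrac{p-3}{2}$. Since multiplying by $m^{-1}-m$ does not raise the $s$-degree, the top term of $\varphi=w_{11}+(m^{-1}-m)w_{12}$ comes from $w_{11}$, yielding $s$-degree $\tfrac{p-1}{2}$ with leading coefficient $\pm1$.

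Property \emph{(2)} is a direct evaluation: at $m=1,\ s=0$ one has $\rho(y)=I$, so all $y$-letters disappear and $\rho(w)|_{(1,0)}=\rho(x)^{e}$ with $e=\epsilon_1+\epsilon_3+\cdots+\epsilon_{p-2}$. Since $\rho(x)|_{m=1}$ is unipotent, its powers have $(1,1)$-entry $1$, whence $w_{11}(1,0)=1$ and (the $(m^{-1}-m)$ term vanishing at $m=1$) $\varphi(1,0)=1\neq0$. For \emph{(4)} I would use a symmetry encoding of $\varphi$: the vector $\xi=(1,\,m^{-1}-m)^{\mathsf T}$ is the $m^{-1}$-eigenvector of $\rho(x)$, and $\varphi=e_1^{\mathsf T}\rho(w)\,\xi$ with $e_1=(1,0)^{\mathsf T}$. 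Conjugation by the swap matrix $J=\bigl(\begin{smallmatrix}0&1\\1&0\end{smallmatrix}\bigr)$ satisfies $J\rho(x)^{\mathsf T}J=\rho(x)|_{m^{-1}}$ and $J\rho(y)^{\mathsf T}J=\rho(y)|_{m^{-1}}$.

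Since transposition reverses products, $J\rho(w)^{\mathsf T}J=\tilde M_{p-1}\cdots\tilde M_1$ with $\tilde M_j=M_j|_{m^{-1}}$. Here I would invoke the standard palindromic symmetry of the Schubert word, $\epsilon_j=\epsilon_{p-j}$, which turns this reversed product into $\rho(w^{\mathrm{sw}})|_{m^{-1}}$, where $w^{\mathrm{sw}}$ is $w$ with the roles of $x$ and $y$ interchanged. A short computation produces an invertible $R$ with $R\rho(x)R^{-1}=\rho(y)$ and $R\rho(y)R^{-1}=\rho(x)$ (valid generically in $s$), so $\rho(w^{\mathrm{sw}})=R\rho(w)R^{-1}$; combining gives $\rho(w)|_{m^{-1}}=R|_{m^{-1}}^{-1}\bigl(J\rho(w)^{\mathsf T}J\bigr)R|_{m^{-1}}$. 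Substituting into $\varphi(m^{-1},s)=e_1^{\mathsf T}\rho(w)|_{m^{-1}}\,\xi|_{m^{-1}}$ and checking the two boundary products $e_1^{\mathsf T}R|_{m^{-1}}^{-1}J\propto\xi^{\mathsf T}$ and $J R|_{m^{-1}}\xi|_{m^{-1}}\propto e_1$ (with reciprocal proportionality constants) collapses the expression to $\xi^{\mathsf T}\rho(w)^{\mathsf T}e_1=\varphi(m,s)$. Thus $\varphi(m,s)=\varphi(m^{-1},s)$, and writing each $s$-coefficient, a symmetric Laurent polynomial in $m$, in the Chebyshev basis $\{m^k+m^{-k}\}$ expresses $\varphi$ as $f(m+m^{-1},s)$ with $f\in\mathbb Z[t,s]$.

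The main obstacle is \emph{(3)}. By \emph{(1)} the leading $s$-coefficient is a unit, so $\varphi$ has no repeated factor lying in $\mathbb Z[m^{\pm1}]$, and it suffices to prove $\varphi$ is squarefree as a polynomial in $s$ over $\mathbb Q(m)$, i.e. that the $s$-discriminant $D(m)=\operatorname{disc}_s\varphi\in\mathbb Z[m^{\pm1}]$ is not identically zero. Since $D$ is a Laurent polynomial, it is enough to exhibit a single $m_0\in\mathbb C^{\ast}$ for which $\varphi(m_0,s)$ has $\tfrac{p-1}{2}$ distinct roots. I would obtain this from the representation-theoretic meaning of the roots: for generic $m_0$ the roots of $\varphi(m_0,\cdot)$ are in bijection with conjugacy classes of nonabelian $SL(2,\mathbb C)$ representations of $\pi_1(X_K)$ with meridian eigenvalue $m_0$, and at each such irreducible representation the adjoint cohomology $H^1(\pi_1(X_K);\mathfrak{sl}_2)$ is one-dimensional because $b_1(X_K)=1$; this regularity forces the corresponding root to be simple, so that $\partial_s\varphi\neq0$ there. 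Making this count rigorous—equivalently, extracting the reducedness of the relevant character-variety curve from Riley's analysis—is the delicate point, and it is the step I expect to require the most care.
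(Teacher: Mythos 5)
Parts (1) and (2) of your proposal are correct and amount to the same elementary computations as the paper: your path-sum expansion of $\rho(w)=M_1\cdots M_{p-1}$ is the paper's ``$s$-degree of a matrix'' bookkeeping in different clothing, and your direct evaluation $\varphi(1,0)=1$ replaces the paper's observation that at $(m,s)=(1,0)$ the unipotent $\rho(x)$ and the identity $\rho(y)$ cannot be conjugate. Your route to (4) is genuinely different: the paper conjugates by an explicit matrix $P$ to show $V(\varphi(m,s))\subset V(\varphi(m^{-1},s))$ away from a finite set, closes up by continuity, and then needs part (3) plus the matching leading terms to upgrade the inclusion of zero sets to an equality of polynomials; your transpose--swap identity combined with the palindromic symmetry $\epsilon_j=\epsilon_{p-j}$ would, if the existence of $R$ and the two boundary-vector identities are actually verified, give the polynomial identity directly and without invoking (3). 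As written, though, those verifications are only asserted.

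The genuine gap is part (3). Reducing to $\operatorname{disc}_s\varphi\not\equiv 0$ and seeking a single $m_0$ at which $\varphi(m_0,\cdot)$ has $\tfrac{p-1}{2}$ distinct roots is the right move, but your proposed mechanism --- that $H^1(\pi_1(X_K);\mathfrak{sl}_2)$ is one-dimensional at every irreducible representation ``because $b_1(X_K)=1$'' --- is false as stated: $b_1=1$ yields only the lower bound $\dim H^1\ge 1$, and equality is precisely the regularity you are trying to prove; character varieties of two-bridge knots can have points where this cohomology jumps, so the argument is circular at the decisive step (which you yourself flag as the delicate one). The paper closes this by a concrete count at $m_0=i$: by Nagasato (see also Lin, Burde--de Rham), a knot group has exactly $\tfrac{|\Delta_K(-1)|-1}{2}$ conjugacy classes of irreducible metabelian $SL(2,\mathbb{C})$ representations, all sending meridians to matrices with eigenvalues $\pm i$, and $|\Delta_K(-1)|=p$ for a $(p,q)$ two-bridge knot; since distinct conjugacy classes in the normal form (4.1) have distinct $s$ (e.g.\ $\operatorname{tr}\rho(xy)=m^2+m^{-2}+s$), the degree-$\tfrac{p-1}{2}$ polynomial $\varphi(i,s)$ has $\tfrac{p-1}{2}$ distinct roots, hence is squarefree, and by (1) so is $\varphi(m,s)$. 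Alternatively one may quote Riley's Theorem~3 that $\varphi(1,s)$ is squarefree. Supplying either of these inputs would complete your proof.
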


\begin{proof}
\begin{enumerate}
 \item Since we assign 
\begin{displaymath}
\rho(x)=\begin{pmatrix}
 m & 1\\
 0&m^{-1}
\end{pmatrix},\quad \rho(y)=\begin{pmatrix}
 m & 0\\
 s & m^{-1}
\end{pmatrix},
\end{displaymath}
 through a direct computation we have 
 \begin{displaymath}
  \rho(xy)=\begin{pmatrix}
   m^2+s&m^{-1}\\
   m^{-1}s & m^{-2}
  \end{pmatrix}, \quad \rho(x^{-1}y)=\begin{pmatrix}
   1-s & -m^{-1}\\
   ms & 1
  \end{pmatrix},
 \end{displaymath}
 \begin{displaymath}
 \rho( xy^{-1})=\begin{pmatrix}
   1-s & m\\
   -m^{-1}s &1
  \end{pmatrix}, \quad \rho(x^{-1}y^{-1})=\begin{pmatrix}
   m^{-2}+s& -m\\
   -ms&m^2
  \end{pmatrix}.
 \end{displaymath}

Say a matrix  $A$  in $M_2(\mathbb{Z}[m^{\pm 1},s])$ has $s$-degree equal to  $n$ if
\begin{displaymath}
A= \begin{pmatrix}
  \pm s^n+f_{11}(m, s) & f_{12}(m,s)\\
  f_{21}(m,s) & f_{22}(m,s)
 \end{pmatrix}, \text{ where }
\end{displaymath}
the $s$-degrees of $f_{11}$, $f_{12}$ and $f_{22}$ are strictly less than $n$ and the $s$-degree of $f_{21}$ is less than or equal to $n$.  Hence the matrices $\rho(xy)$, $\rho(x^{-1}y)$, $\rho(xy^{-1})$ and $\rho(x^{-1}y^{-1})$ all have $s$-degrees equal to $1$. Moreover,  the product of an $s$-degree $n$ matrix and an $s$-degree $m$ matrix is an $s$-degree $m+n$ matrix.
 Since
 \begin{displaymath}
   w=(x^{\epsilon_1}y^{\epsilon_2})\dots (x^{\epsilon_{p-2}}y^{\epsilon_{p-1}}), \text{ with } \epsilon_i=\pm 1, 
 \end{displaymath}
 we have that the matrix $$\rho(w)=\begin{pmatrix}
 w_{11} & w_{12}\\
 w_{21} & w_{22}
\end{pmatrix}$$ is a product of $\frac{p-1}{2}$ $s$-degree $1$ matrices. Therefore, the matrix $\rho(w)$ has  $s$-degree equal to $\frac{p-1}{2}$.  That is, the entry $w_{11}$ has   $\pm s^{n}$ as the leading term and the $s$-degree of $w_{12}$ is strictly less than $\frac{p-1}{2}$. As a result, $\varphi(m,s)= w_{11}+(m^{-1}-m)w_{12}$ has leading term equal to $\pm s^{n}$. 

\bigskip 

\item Notice that as $m=1$ and $s=0$, we have 
\begin{displaymath}
  \rho(x)=\begin{pmatrix}
   1&1\\
   0 & 1
  \end{pmatrix}, \quad \rho(y)=\begin{pmatrix}
   1 & 0\\
   0 & 1
  \end{pmatrix}.
 \end{displaymath}
This assignment can not define a representation of the knot group
  $$\pi_1(X_K)=<x,y: wx=yw>,$$
  because these two matrices $ \rho(x)=\begin{pmatrix}
   1&1\\
   0 & 1
  \end{pmatrix}$ and  $\rho(y)=\begin{pmatrix}
   1 & 0\\
   0 & 1
  \end{pmatrix}$ are not conjugate to each other. Therefore, $\varphi(1,0)\neq 0$ by the Proposition \ref{prop:ril}.
  
  \bigskip
  
  \item  Let $\Delta_K(t)$ be the Alexander polynomial of the knot $K$. It is shown in \cite[Proposition 1.1, Theorem 1.2]{na} (also see \cite{lin, bf}) that any knot group has $\frac{|\Delta_K(-1)|-1}{2}$  irreducible $SL(2,\mathbb{C})$ metabelian representations up to conjugation and that these metabelian representations send meridional elements to matrices of eigenvalues $\pm i$. For a $(p,q)$ two-bridge knot, $p$ equals  $|\Delta_K(-1)|$. This implies that the degree $\frac{p-1}{2}$ polynomial equation $\varphi(i, s)=0$ has $\frac{p-1}{2}$ distinguished roots. Therefore $\varphi(i,s)$ does not have repeated factors and so is $\varphi(m,s)$.  
  
Note that  we can also use the fact that $\varphi(1,s)$ does not have any repeated factors to prove that $\varphi(m,s)$ has no repeated factors \cite[Theorem 3]{ril1}.

\bigskip

\item
   Assume that the assignment
  \begin{displaymath}
   \rho(x)=\begin{pmatrix}
    m & 1\\
    0&m^{-1}
   \end{pmatrix}, \quad \rho(y)=\begin{pmatrix}
    m & 0\\
    s & m^{-1}
   \end{pmatrix}
  \end{displaymath}
 defines a representation of the knot group $$\pi_1(X_K)=<x,y : wx =yw>.$$ Then 
 \begin{displaymath}
   \rho'(x)= P\begin{pmatrix}
    m & 1\\
    0&m^{-1}
   \end{pmatrix}P^{-1} = \begin{pmatrix}
    m^{-1} & 1\\
    0&m
   \end{pmatrix}
   \end{displaymath}
   \begin{displaymath}
   \rho'(y)=P\begin{pmatrix}
    m & 0\\
    s & m^{-1}
   \end{pmatrix}P^{-1} = \begin{pmatrix}
    m^{-1} & 0\\
    s & m
   \end{pmatrix}
 \end{displaymath}
 also defines a representation,
 where  \begin{displaymath}
 P= \begin{pmatrix}
    1 & (m^{-1}-m)/s\\
    m-m^{-1} & 1
   \end{pmatrix}.
 \end{displaymath}
The matrix $P$ is  well-defined and invertible whenever $(m,s)$ is not in  the finite set 
$$S\triangleq\{(m,s): s= 0 , \varphi(m,s)=0\}\cup\quad\quad\quad\quad\quad$$
$$\quad\quad\quad\quad\quad\quad\{(m,s): s=-(m-m^{-1})^2, \varphi(m,s)=0\}.$$ 

The set $S$ is finite because neither $\varphi(m,0)$ nor $\varphi(m, -(m-m^{-1})^2)$ is a  zero polynomial. Otherwise,  $(1,0)$ will be a solution for $\varphi(m,s)$, which contradicts part $(2)$.

\bigskip 

Denote by $V(g)$ the solution set of  a polynomial $g$. As we described above, 
\begin{displaymath}
 V(\varphi(m,s)) - S \subset V(\psi(m,s)),
\end{displaymath}
where $\psi(m,s)=\varphi(m^{-1},s)$. Points in $S$ are not  isolated, since they are embedded inside the algebraic curve $V(\varphi(m,s))$. By continuity, we have 
\begin{displaymath}
 V(\varphi(m,s)) \subset V(\psi(m,s)).
\end{displaymath}

By  part $(3)$, neither of $\varphi(m,s)$ and $\psi(m,s)$ has repeated factors, so 
 the  ideal $<\psi(m,s)>$  is contained inside the ideal  $<\varphi(m,s)>$ in $\mathbb{Z}[m^{\pm 1}, s]$.
 On the other hand, both $\varphi(m,s)$ and $\psi(m,s)$ have the same leading term, which is either $s^{(p-1)/2}$ or $-s^{(p-1)/2}$, so $\varphi(m,s)= \psi(m,s)=\varphi(m^{-1}, s)$.

\end{enumerate}
\end{proof}

Now we are ready to prove the main result.  

\begin{theorem}
\label{thm:tb}
Given a $(p,q)$ two-bridge knot $K$, with $p\equiv 3 \text{ mod } 4$, there are only  finitely many cyclic branched covers, whose fundamental groups  are not left-orderable.
\end{theorem}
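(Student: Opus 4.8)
The plan is to produce, for every sufficiently large $n$, a non-abelian representation $\rho_n\colon \pi_1(X_K)\to SL(2,\mathbb{R})$ with $\rho_n(Z^n)=\pm I$, and then to invoke Theorem \ref{thm:cri} (two-bridge knots being prime and their branched covers irreducible) to conclude that $\pi_1(X_K^{(n)})$ is left-orderable. Since this will hold for all $n$ beyond some bound, only finitely many covers can fail to be left-orderable. The representations will be Riley representations, i.e. solutions $(m,s)$ of $\varphi(m,s)=0$ (which are automatically non-abelian by Proposition \ref{prop:ril}), with $m=e^{i\theta}$ on the unit circle. For such $m$ with $\theta\in(0,\pi)$, the meridian image $\rho(x)=\begin{pmatrix} m & 1\\ 0 & m^{-1}\end{pmatrix}$ is elliptic with eigenvalues $e^{\pm i\theta}$; choosing $\theta=\pi/n$ forces $\rho(x)^n=-I$, and since all meridians are conjugate and $-I$ is central, $\rho(Z^n)=-I=\pm I$.

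Writing $t=m+m^{-1}=2\cos\theta\in(-2,2)$ and $\varphi(m,s)=f(t,s)$ via Lemma \ref{lem:ril}(4), the first key point is arithmetic: because $p\equiv 3 \pmod 4$, the $s$-degree $(p-1)/2$ of $f$ is \emph{odd}, with leading coefficient $\pm 1$ by Lemma \ref{lem:ril}(1). Hence for every real $t$ the polynomial $f(t,\cdot)$ is a real polynomial of odd degree and has at least one real root $s(t)$. This is precisely where the congruence hypothesis is used, and it is the reason the statement concerns only $p\equiv 3\pmod 4$.

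The delicate step is to guarantee that such a real solution gives a representation conjugate into $SL(2,\mathbb{R})$ rather than into the compact form $SU(2)$. Here I would use the conjugating matrix $P=\begin{pmatrix} 1 & (m^{-1}-m)/s\\ m-m^{-1} & 1\end{pmatrix}$ from the proof of Lemma \ref{lem:ril}(4): when $m=e^{i\theta}$ and $s$ is real one has $\overline{\rho}=P\rho P^{-1}$, and a direct computation gives $\overline{P}P=\lambda I$ with $\lambda=1-(4-t^2)/s$ (using $4\sin^2\theta=4-t^2$). By the standard descent for such real structures, the representation is conjugate into $SL(2,\mathbb{R})$ exactly when $\lambda>0$, i.e. when $s<0$ or $s>4-t^2$, and into $SU(2)$ when $0<s<4-t^2$. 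Thus it suffices to locate, over an interval of $t$, a real root lying outside $(0,4-t^2)$.

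Finally I would carry out a sign analysis near $t=2$. Normalizing the leading coefficient of $f(t,\cdot)$ to $+1$, so that $f(t,s)\to\pm\infty$ as $s\to\pm\infty$, Lemma \ref{lem:ril}(2) gives $f(2,0)=\varphi(1,0)\neq 0$. If $f(2,0)>0$, then $f(t,0)>0$ for $t$ in a left-neighborhood of $2$, and the intermediate value theorem yields a root $s(t)<0$; if $f(2,0)<0$, then since $4-t^2\to 0$ as $t\to 2^-$ we have $f(t,4-t^2)\to f(2,0)<0$, and the intermediate value theorem yields a root $s(t)>4-t^2$. In either case $\lambda>0$, so there is an interval $(2-\delta,2)$ over which a non-abelian representation conjugate into $SL(2,\mathbb{R})$ exists. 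Because $2\cos(\pi/n)\in(2-\delta,2)$ for all large $n$, taking $\theta=\pi/n$ supplies the required $\rho_n$ for every sufficiently large $n$. The one point demanding genuine care is the descent criterion separating $SL(2,\mathbb{R})$ from $SU(2)$; everything else is an application of Theorem \ref{thm:cri} together with the real-root count forced by the parity of $(p-1)/2$.
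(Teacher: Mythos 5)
Your proposal is correct and follows essentially the same route as the paper: Riley representations with $m=e^{i\pi/n}$, the odd $s$-degree $(p-1)/2$ forced by $p\equiv 3 \bmod 4$ to produce a real root, the criterion $s<0$ or $s>4\sin^2\theta$ for conjugacy into $SL(2,\mathbb{R})$, and an appeal to Theorem \ref{thm:cri}. The only divergences are localized: where you run a two-case intermediate value argument at $s=0$ and $s=4-t^2$ using $\varphi(1,0)\neq 0$, the paper instead follows a continuous real root branch $s(\theta)$ with $s(0)=s_0\neq 0$ (using that $\varphi(1,s)$ has simple roots) and lets $4\sin^2\theta\to 0$; and where you re-derive the $SL(2,\mathbb{R})$/$SU(2)$ dichotomy by descent through $\overline{P}P=\lambda I$, the paper cites Khoi and exhibits an explicit conjugation into $SU(1,1)$ --- both are sound, your IVT version has the minor virtue of not needing the simple-root input at $m=1$, and the one point you should make explicit is that the strict inequalities $s<0$ or $s>4-t^2$ are precisely what exclude the reducible locus ($s=0$ or $s=-(m-m^{-1})^2=4-t^2$), so the irreducibility that the descent argument tacitly requires does hold.
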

\begin{proof}
 We are going to show that for sufficiently large $n$, the group  $\pi_1(X_K)$ has a non-abelian $SL(2,\mathbb{R})$ representation with $x^n$ sent to $-I$. 
 
 As before, we assign
 \begin{displaymath}
  \rho(x)=\begin{pmatrix}
   m & 1\\
   0 &m^{-1}
  \end{pmatrix}, \quad \rho(y)=\begin{pmatrix}
   m & 0\\
   s & m^{-1}
  \end{pmatrix}.
 \end{displaymath}

Let $m=e^{i \theta}$. Since $p=3 \text{ mod } 4$, 
by Lemma $4.2$, we have that 
 $\varphi(e^{i\theta},s)$
is an odd degree real polynomial in $s$.  So for any given $\theta$, the equation $\varphi(e^{i\theta}, s)=0$ has at least one real solution for $s$. 
We assume that  $s_0$ is a real solution of the equation $\varphi(1, s)=0$. It is known that  the polynomial $\varphi(1,s)$ does not have repeated factors \cite[Theorem 3]{ril1}. Hence, $\varphi_s(e^{i\theta}, s)|_{\theta=0,s=s_0} \neq 0$ and locally there exists a real function $s(\theta)$ such that $\varphi(e^{i \theta},s(\theta))=0$ and $s(0)=s_0$.
 
 \bigskip
 Consider the following one-parameter family of non-abelian representations.

\begin{displaymath}
  \rho\{\theta\}(x)=\begin{pmatrix}
   e^{i\theta} & 1\\
   0 & e^{-i\theta}
  \end{pmatrix}, \quad \rho\{\theta\}(y)=\begin{pmatrix}
   e^{i\theta} & 0\\
   s(\theta) & e^{-i\theta}
  \end{pmatrix}.
 \end{displaymath}
 
 As $\theta \neq 0$, the representations $\rho\{\theta\}$ can be diagonalized to the following forms which we still denote by $\rho\{\theta\}$,  
 
 \begin{equation}
\rho\{\theta \}(x)= \begin{pmatrix}
 e^{i \theta} & 0\\
 0& e^{-i\theta}\\
 \end{pmatrix},\quad \rho\{\theta\}(y)=\begin{pmatrix}
  e^{i\theta}-\frac{s(\theta)}{2 \sin(\theta)} i&-1+\frac{s(\theta)}{4 \sin^2(\theta)}\\
  s(\theta)&e^{-i\theta}+\frac{s(\theta)}{2 \sin(\theta)} i
 \end{pmatrix}.
 \label{equ:rep2}
\end{equation}

According to \cite[p.~786]{kho}, this representation can be conjugated to an $SL(2,\mathbb{R})$ representation if and only if 
 \begin{equation}
\text{ either }  s(\theta)<0 \text{ or } s(\theta)>4 \sin^2(\theta).
  \end{equation}
We can verify this via a direction computation. In fact, when $s<0$ or $s>4 \sin^2(\theta)$, the representation $\rho\{\theta\}$ is conjugate to an $SU(1,1)$  representation by the matrix

\begin{displaymath}
 \begin{pmatrix}
  \sqrt{\frac{1}{\sqrt{t}}+t} & t\\
  \sqrt{t} & \sqrt{\sqrt{t}+t^2}
 \end{pmatrix}, \text{ where } t=\frac{1}{4\sin^2(\theta)}-\frac{1}{s} \text{ is positive},
\end{displaymath}
and $SU(1,1)$ is conjugate to $SL(2,\mathbb{R})$  via the matrix $\begin{pmatrix}
  1& -i\\
  1 & i
\end{pmatrix}$ in $GL(2,\mathbb{C})$.

On the other hand,  
\begin{displaymath}
 \lim_{\theta\rightarrow 0} s(\theta) =s_0, \text{ where $s_0$ is not equal to }0 \text{ by Lemma \ref{lem:ril} part }(2).
\end{displaymath}

Hence, when $\theta$ is small enough, either $s(\theta)<0$ or $s(\theta)>4 \sin^2(\theta)$. 
Now let  $\theta=\pi/n$.  For sufficiently large $n$, the non-abelian representation $\rho\{\theta\}$ as in (\ref{equ:rep2})  satisfies $\rho\{\theta\}(x)^n=-I$ and  conjugates to an $SL(2,\mathbb{R})$ representation. Therefore,  by  Theorem \ref{thm:cri}, the conclusion follows.
\end{proof}

We are ending this paper by computing one specific  example. 
\begin{example}
  We consider the two bridge knot $(7, 4)$, which is listed as $5_2$ in Rolfsen's table.  The fundamental group $\pi_1(X_{5_2})$ has a presentation 
 \begin{displaymath}
  \pi_1(X_{5_2}) =<x,y:wx=yw>,
\end{displaymath}
where $w=xyx^{-1}y^{-1}xy$.

From this presentation, we can compute the polynomial 
$$\varphi(m,s)=s^3+(2(m^2+m^{-2})-3)s^2+((m^4+m^{-4})-3(m^2+m^{-2})+6)s+2(m^2+m^{-2})-3.$$ 
as defined in (\ref{equ:ril}).  And 
\begin{displaymath}
 \varphi(e^{i\theta},s)= s^3+(4\text{ cos}(2\theta)-3)s^2+(2\text{ cos}(4\theta)-6 \text{ cos}(2\theta)+6)s+4\text{ cos}(2\theta)-3,
\end{displaymath}
which is a real polynomial in $s$ with degree $3$. Hence, we can solve a closed formula for $s(\theta)$ such that $\varphi(e^{i\theta}, s(\theta))=0$. Figure \ref{fig:q} is the graph of the solution $s(\theta)$ on the interval $\theta\in [0,1]$.

\bigskip

\begin{figure}[H]
\begin{center} 
\includegraphics[height=4cm,width=9cm]{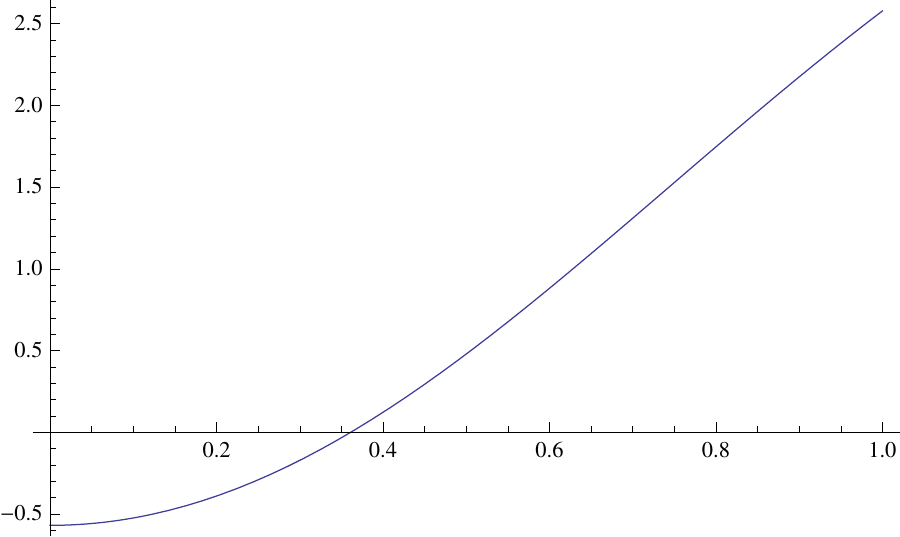} 
\caption{\small \sl }
\label{fig:q}
\end{center} 
\end{figure}
\end{example}

In particular, when $n=9$, we have that $\frac{\pi}{9}\approx 0.349$ and $s(\frac{\pi}{9})\approx -0.03667$.  The group  $\pi_1(X_{5_2}^{(n)})$ is left-orderable  when $n\geq 9$. For cyclic branched covers $X_{5_2}^{(n)}$ with $n<9$, the other known cases are $n=2,3$ \cite{dpt} and $n=4$ \cite{gl}, none of which has a left-orderable fundamental group.

{\small \subsection*{Acknowledgment } The author would like to thank Oliver Dasbach for drawing her attention to the topic of the current paper and his consistent encouragement and support throughout her graduate study.  Also, the author  would  like to give thanks to Michel Boileau, Tye Lidman and Neal Stoltzfus for helpful conversation and suggestions. Finally, she gives many thanks to the referee for pointing out the similarity between Theorem \ref{thm:cri} and \cite[Theorem 6]{bgw} and his or her many helpful comments.}

\bibliographystyle{alpha}
\bibliography{reference}

\begin{thebibliography}{BGW13}

\bibitem[Ber91]{ber}
G.~Bergman.
\newblock Right orderable groups that are not locally indicable.
\newblock {\em Pacific J. Math.}, 147(2):243--248, 1991.

\bibitem[BF08]{bf}
H.~Boden and S.~Friedl.
\newblock Metabelian ${S}{L}(n,\mathbb{C})$ representations of knot groups.
\newblock {\em Pacific J. Math.}, 238(1):7--25, 2008.

\bibitem[BGW13]{bgw}
S.~Boyer, C.~Gordon, and L.~Watson.
\newblock On {L}-space and left-orderable fundamental groups.
\newblock {\em Mathematische Annalen}, 356(4):1213--1245, 2013.

\bibitem[BRW05]{brw}
S.~Boyer, D.~Rolfsen, and B.~Wiest.
\newblock Orderable $3$-manifold groups.
\newblock {\em Annales de l'institut Fourier}, 55(1):243--288, 2005.

\bibitem[CD03]{cd}
D.~Calegari and N.~Dunfield.
\newblock Laminations and groups of homeomorphisms of the circle.
\newblock {\em Inventiones Mathematicae}, 152:149--207, 2003.

\bibitem[CLW13]{clw}
A.~Clay, T.~Lidman, and L.~Watson.
\newblock Graph manifolds, left-orderability and amalgamation.
\newblock {\em Algebraic \& Geometric Topology}, 13:2347--2368, 2013.

\bibitem[DPT05]{dpt}
M.~D{\c a}bkowski, J.~Przytycki, and A.~Togha.
\newblock Non-left-orderable $3$-manifold groups.
\newblock {\em Canadian Math. Bull.}, 48(1):32--40, 2005.

\bibitem[GHY01]{ghy}
\'E. GHYS.
\newblock Groups acting on the circle.
\newblock {\em Enseignement Mathematique}, 47(3/4):329--408, 2001.

\bibitem[GL]{gl}
C.~Gordon and T.~Lidman.
\newblock Taut foliations, left-orderability, and cyclic branched covers.
\newblock Preprint.

\bibitem[Gre]{gre}
J.~Greene.
\newblock Alternating links and left-orderability.
\newblock {\em arXiv:1107.5232 [math.GT].}

\bibitem[HN12]{hn}
J.~Hilgert and K.~Neeb.
\newblock {\em Structure and geometry of {L}ie groups}.
\newblock Springer, 2012.

\bibitem[HS85]{hs}
J.~Howie and H.~Short.
\newblock The band-sum problem.
\newblock {\em J. London Math. Soc.}, 31(2):571--576, 1985.

\bibitem[Ito13]{ito}
T.~Ito.
\newblock Non-left-orderable double branched coverings.
\newblock {\em Algebraic \& Geometric Topology}, 13:1937 --1965, 2013.

\bibitem[Kaw96]{kaw}
A~Kawauchi.
\newblock {\em Survey on Knot theory}.
\newblock Springer, 1996.

\bibitem[Kho03]{kho}
V.~Khoi.
\newblock A cut-and-paste method for computing the {S}eifert volumes.
\newblock {\em Mathematische Annalen}, 326:759--801, 2003.

\bibitem[Lin01]{lin}
X.~Lin.
\newblock Representations of knot groups and twisted {A}lexander polynomials.
\newblock {\em Acta Mathematica Sinica, English Series}, 17(3):361--380, July
  2001.

\bibitem[LS01]{ls}
R.~Lyndon and P.~Schupp.
\newblock {\em Combinatorial Group Theory}.
\newblock Springer-Verlag, New York, 2001.

\bibitem[Nag08]{na}
F.~Nagasato.
\newblock Finiteness of a section of the ${S}{L}(2,\mathbb{C})$-character
  variety of the knot group.
\newblock {\em Kobe J. Math.}, 24(2):223--240, 2008.

\bibitem[OS05a]{os}
P.~Ozsv\'ath and Z.~Szab\'o.
\newblock On knot {F}loer homology and lens space surgeries.
\newblock {\em Topology}, 44:1281--1300, 2005.

\bibitem[OS05b]{os1}
P.~Ozsv\'ath and Z.~Szab\'o.
\newblock On the {H}eegaard {F}loer homology of branched double-covers.
\newblock {\em Adv. Math.}, 194:1--33, 2005.

\bibitem[Pet]{tp}
T.~Peters.
\newblock On {L}-spaces and non left-orderable $3$-manifold groups.
\newblock Preprint.

\bibitem[Plo84]{plo}
S.~Plotnick.
\newblock Finite group actions and nonseparating 2-spheres.
\newblock {\em Proc. Amer. Math. Soc.}, 90(3):430--432, 1984.

\bibitem[Ril72]{ril1}
R.~Riley.
\newblock Parabolic representations of knot groups. {I}.
\newblock {\em Proc. London Math. Soc.}, 24(3):217-- 242, 1972.

\bibitem[Ril84]{ril2}
R.~Riley.
\newblock Nonabelian representations of $2$-bridge knot groups.
\newblock {\em The Quarterly J. Math.}, 35(2):191--208, 1984.

\bibitem[Ter]{ter}
M.~Teragaito.
\newblock Four-fold cyclic branched covers of genus one two-bridge knots are
  {L}-spaces.
\newblock {\em Bolet{\'\i}n de la Sociedad Matem{\'a}tica Mexicana}.
\newblock to appear.

\bibitem[Tra]{tran}
A.~Tran.
\newblock On left-orderablility and cyclic branched coverings.
\newblock {\em J. Math. Soc. Japan}.
\newblock to appear.

\bibitem[Wei95]{wei}
C.~Weibel.
\newblock {\em An introduction to homological algebra}.
\newblock Cambridge university press, 1995.

\end{thebibliography}

\end{document}